\newtheorem{theorem}{Theorem}[section]
\newtheorem{thmletter}{Theorem}
\newtheorem{lemma}[theorem]{Lemma}
\newtheorem{corollary}[theorem]{Corollary}
\theoremstyle{definition}
\newtheorem{definition}[theorem]{Definition}
\newtheorem{remark}[theorem]{Remark}
\def\N{\mathbb N}
\def\R{\mathbb R}
\def\C{\mathbb C}
\def\S{\mathbb S}
\numberwithin{equation}{section}
\author{Alberto Debernardi}
\title{Hankel transforms of general monotone functions}
\date{}
\begin{document}
	\maketitle

	{\small
		\noindent \textbf{Abstract}:
		We show that the Hankel transform of a general monotone function converges uniformly if and only if the limit function is bounded. To this end, we rely on an Abel-Olivier test for real-valued functions. Analogous results for cosine series are derived as well. We also show that our statements do not hold without the general monotonicity assumption in the case of cosine integrals and series.
		\newline \newline
		\textbf{AMS 2010 Primary subject classification}: 42A38. Secondary: 42A20, 44A20.\newline \newline
		\textbf{Keywords}: Hankel transform, boundedness, uniform convergence, general monotonicity, cosine series.\newline\newline
		This research was partially funded by the CERCA Programme of the Generalitat de Catalunya, Centre de Recerca Matem\`atica, and the MTM2017--87409--P grant.
	}

	\section{Introduction}
	In this paper we consider the Hankel transform of order $\alpha\geq -1/2$,
	\begin{equation}
	\label{EQhankel}
	H_\alpha f(u)=\int_0^\infty t^{2\alpha+1}f(t)j_\alpha(ut)\, dt,
	\end{equation}
	where $j_\alpha$ is the normalized Bessel function of order $\alpha$. We are interested in studying the conditions on $f$ that are equivalent to the uniform convergence and boundedness (as a function of $u$) of \eqref{EQhankel}. By uniform convergence of \eqref{EQhankel}, we mean that the partial integrals
	\begin{equation}
	\label{EQpartialint}
	\int_0^N t^{2\alpha+1}f(t)j_\alpha(ut)\, dt
	\end{equation}
	converge to $H_\alpha f$ uniformly in $u$ as $N\to \infty$.
	
	As is well known, the Hankel transform of order $\alpha=d/2-1$ (with $d\in \N$) appears as the Fourier transform of a radial function defined on $\R^d$. More precisely, for a radial function $F(x)=f_0(|x|)$ defined on $\R^d$, one has (cf. \cite{SWfourier})
	$$
	\widehat{F}(y)  = |\S^{d-1}|\int_{0}^{\infty} t^{d-1} f_0(t) j_{d/2-1}(2\pi |y|t)\, dt,
	$$
	where $|\S^{d-1}|$ denotes the area of the unit sphere $\S^{d-1} =\{x\in \R^d : |x|=1\}$. If $d=1$, we take $|\S^0|=2$, and $H_{-1/2}f_0$ corresponds to the cosine transform of the even function $f_0$. Note that if $F$ is radial, then $\widehat{F}$ is also radial.

	The study of the convergence of Fourier transforms (or series) often requires monotonicity-type assumptions on the functions (or sequences) involved (see \cite{mio,DLT,askhat,DTgmlipschitz,totik,TikJAT} and the references therein). We also refer the reader to the classical paper \cite{chaundy-jolliffe}, where the problem of uniform convergence and boundedness of sine series was first considered for monotone sequences (see also \cite[Ch.~V]{zygmund}). In the present paper the so-called \textit{general monotonicity} (cf. \cite{LTnachr,TikGM}) will play a central role. From now on, all the functions we consider will be locally integrable, locally of bounded variation on $(0,\infty)$, and vanishing at infinity. We denote $\R_+:=[0,+\infty)$. The integral \eqref{EQhankel} as well as the integrals over infinite intervals are understood as improper Riemann integrals.
	\begin{definition}\label{DEFgm}
		Let $f:\R_+\to \C$. We say that $f$ is general monotone, written $f\in GM$, if there exist constants $C,\lambda>1$ such that for every $x>0$,
		$$
		\int_x^{2x}|df(t)|\leq C\int_{x/\lambda}^{\lambda x} \frac{|f(t)|}{t}\, dt.
		$$
	\end{definition}
	We define the general monotone sequences likewise. We say that a complex-valued sequence $\{a_n\}\in GMS$ if there exist constants $C,\lambda>1$ such that for every $n$,
	$$
	\sum_{k=n}^{2n}|a_k-a_{k+1}|\leq C\sum_{k=n/\lambda}^{\lambda n}\frac{|a_k|}{k}.
	$$
	
	We assume, without loss of generality, that the constant $\lambda$ from the definitions of $GM$ and $GMS$ is a natural number of the form $2^\nu$, $\nu\in \N$.
	
	If we denote by $M$ the class of (nonnegative) monotone functions, it is clear that $M\subsetneq GM$. In this paper we are mainly concerned about \textit{real-valued} $GM$ functions. Those have shown to preserve several desired properties of monotone functions, not only when they are nonnegative, but also in the general real-valued case (see \cite{mio,askhat,DTgmlipschitz,totik}). Here we go one step further and prove Abel-Olivier's test for $GM$ functions (see also \cite{LTZ}).
		\begin{theorem}\label{THM-main-functions}
			Let $f\in GM$ be real-valued. If the integral 
			\begin{equation}
			\label{EQintegrals}
			\int_0^\infty f(t)\, dt
			\end{equation}
			converges, then
			\begin{equation*}
			\label{EQfunctinfinity}
			tf(t)\to 0 \qquad \text{as }t\to \infty.
			\end{equation*}
		\end{theorem}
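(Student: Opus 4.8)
The plan is to reduce the statement to a decay property for the local $L^1$ mass of $f$ and then to exploit general monotonicity to control the oscillations of $f$ on dyadic blocks. First I would establish the pointwise bound
\begin{equation*}
t|f(t)|\le\Big|\int_t^{2t}f(s)\,ds\Big|+t\int_t^{2t}|df(s)|,
\end{equation*}
obtained by writing $f(t)=f(s)-\int_t^s df$ for $s\in[t,2t]$ and averaging in $s$ over $[t,2t]$. Since the integral \eqref{EQintegrals} converges, the Cauchy criterion gives $\int_t^{2t}f\to0$, so the first term is $o(1)$ and it remains to prove $t\int_t^{2t}|df|\to0$. Applying Definition \ref{DEFgm} and using that $t/s\le\lambda$ on $[t/\lambda,\lambda t]$, I bound $t\int_t^{2t}|df|\le C\lambda\int_{t/\lambda}^{\lambda t}|f(s)|\,ds$; covering $[t/\lambda,\lambda t]$ by the $2\nu$ dyadic blocks $[2^kt,2^{k+1}t]$ with $k=-\nu,\dots,\nu-1$, the problem reduces to showing that the windowed norm $\int_t^{2t}|f(s)|\,ds$ tends to $0$ as $t\to\infty$. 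This cannot be replaced by absolute convergence of \eqref{EQintegrals}, which may genuinely fail for sign-changing $f\in GM$.

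The second ingredient consists of two consequences of general monotonicity on a single block $[t,2t]$. Writing $\overline{f}(t)=\frac1t\int_t^{2t}f$ and $V(t):=\int_t^{2t}|df|$, the elementary inequality $|f(s_1)-f(s_2)|\le V(t)$ shows that $f$ deviates from the constant $\overline{f}(t)$ by at most $V(t)$; moreover $|\overline{f}(t)|=\frac1t\big|\int_t^{2t}f\big|\to0$. On the other hand, Definition \ref{DEFgm} yields the tempering estimate
\begin{equation*}
V(t)\le C\int_{t/\lambda}^{\lambda t}\frac{|f(s)|}{s}\,ds\le C'\sup_{s\in[t/\lambda,\lambda t]}|f(s)|,\qquad C'=2C\nu\ln 2,
\end{equation*}
so the total variation of $f$ on each block is comparable to the size of $f$ nearby. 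Taken together, these two facts say that on every dyadic block $f$ behaves like a bounded number of humps whose heights already account for essentially all of the oscillation.

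The heart of the argument, and the step I expect to be the main obstacle, is to deduce from these estimates that $\int_t^{2t}|f|\to0$. The natural target is the inequality $\int_t^{2t}|f|\lesssim\sup_{[a,b]\subset[t,2t]}\big|\int_a^b f\big|$, whose right-hand side tends to $0$ uniformly by the Cauchy criterion for \eqref{EQintegrals}. I would prove it by contradiction: if $\int_t^{2t}|f|\not\to0$, there are $t_n\to\infty$ with $\int_{t_n}^{2t_n}|f|\ge\beta>0$; since $|\overline{f}(t_n)|\to0$, the oscillation estimate forces the regime $\sup_{[t_n,2t_n]}|f|\lesssim V(t_n)$, and the tempering estimate then limits to a constant $P_0$, independent of $n$, the number of constant-sign excursions of $f$ on $[t_n,2t_n]$ that carry non-negligible mass. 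Consequently the positive or the negative part of the mass concentrates on a single excursion $[a_n,b_n]\subset[t_n,2t_n]$ with $\big|\int_{a_n}^{b_n}f\big|=\int_{a_n}^{b_n}|f|\ge\beta/(2P_0)$ and $a_n\to\infty$, contradicting convergence of \eqref{EQintegrals}. The delicate point, to be made rigorous and uniform across scales, is precisely the passage from the tempered-variation bound to the assertion that only boundedly many excursions can carry appreciable $L^1$ mass: one must quantify how each excursion of height $\theta$ forces an amount $\gtrsim\theta$ of variation, and then balance these heights against the bound on $V(t)$ so that tall-and-thin and short-and-wide excursions are simultaneously controlled.
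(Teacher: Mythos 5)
Your first two ingredients are sound, and your overall strategy --- locate a single constant-sign excursion in $[t,2t]$ that carries a definite fraction of the mass and contradict the Cauchy criterion for $\int_0^\infty f$ --- is precisely the idea behind the paper's key lemma. But the step you yourself flag as delicate is a genuine gap, and in the form you propose it cannot be closed because it is circular. To bound the number of excursions on $[t_n,2t_n]$ carrying mass $\ge\delta$, the only available argument is that such an excursion must reach height $\ge\delta/t_n$ and therefore contributes at least $\delta/t_n$ to $V(t_n)$, so their number is at most $t_nV(t_n)/\delta$. Your tempering estimate gives $t_nV(t_n)\le C\lambda\int_{t_n/\lambda}^{\lambda t_n}|f|$, i.e.\ the excursion count is controlled exactly by the windowed $L^1$ norm (over a larger window) whose decay you are trying to prove, or equivalently by $t_n\sup_{[t_n/\lambda,\lambda t_n]}|f|$, which is the very quantity the theorem asserts tends to zero. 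Nothing in your three estimates excludes, a priori, that this quantity is unbounded along $t_n$, in which case your $P_0$ is not a constant and the pigeonhole step fails.

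What is missing is the mechanism of the paper's Lemmas~\ref{LEMgood1-functions} and~\ref{LEMgood2-functions}: excursions are counted by height relative to $A_n=\sup_{[2^n,2^{n+1}]}|f|$ rather than by mass, and this count is bounded only at scales where the supremum over the enlarged window $[2^{n-2\nu},2^{n+2\nu}]$ is comparable to $A_n$ (the ``good'' scales), since only then does the $GM$ condition give $V\lesssim A_n$. One also needs the companion fact (Lemma~\ref{LEMgood1-functions}, itself a small self-improvement argument) that $|f|\gtrsim A_n$ on a subset of measure $\gtrsim 2^n$, which is what converts the pigeonholed constant-sign excursion into the lower bound $|\int_{\ell_n}^{m_n}f|\gtrsim 2^nA_n$ and hence into $2^nA_n\to0$. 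At the remaining ``bad'' scales, where $\sup_{[t/\lambda,\lambda t]}|f|\gg\sup_{[t,2t]}|f|$, the tempering estimate says nothing useful about the oscillation of $f$ on $[t,2t]$ relative to its local size, and the paper requires a separate chaining argument propagating the decay from good scales to bad ones through sequences $s_0,s_1,\dots$ with $A_{s_k}<2^{-4\nu}A_{s_{k+1}}$. Your proposal contains no analogue of this dichotomy or of the transfer step, so even a repaired excursion count would establish the conclusion only along the good scales.
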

		If $f\geq 0$, then the convergence of \eqref{EQintegrals} is equivalent to $f\in L^1(\R_+)$, and the statement of Theorem~\ref{THM-main-functions} follows easily from known estimates for $GM$ functions (see Lemma~\ref{LEMgmest} in Section~\ref{SEC4}).
		
		Theorem~\ref{THM-main-functions} can be easily improved to an ``if and only if'' statement.
		\begin{corollary}
			\label{CORrealtest-functions}
			Let $f\in GM$ be real-valued and $\nu\in \R\backslash\{0\}$. If $t^\nu f(t)\to 0$ as $t\to 0$, then $\int_0^\infty t^{\nu-1}f(t)\, dt$ converges if and only if
			$$
			t^\nu f(t)\to 0\, \, \text{as } t\to \infty  \qquad \text{and}\qquad \int_0^\infty t^\nu df(t)\, \, \text{converges,}
			$$
			and moreover,
			$$
			\int_0^\infty t^{\nu-1}f(t)\, dt =-\frac{1}{\nu}\int_0^\infty t^\nu df(t).
			$$
		\end{corollary}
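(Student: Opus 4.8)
The plan is to reduce both implications to a single integration-by-parts identity and to invoke Theorem~\ref{THM-main-functions} exactly once, after observing that the class $GM$ is stable under multiplication by power weights. Since $f$ is locally of bounded variation, for every $0<a<b<\infty$ the Riemann--Stieltjes integration by parts, using $t^{\nu-1}=\frac{d}{dt}(t^\nu/\nu)$, yields
\begin{equation*}
\int_a^b t^{\nu-1}f(t)\,dt=\frac{1}{\nu}\bigl[t^\nu f(t)\bigr]_a^b-\frac{1}{\nu}\int_a^b t^\nu\,df(t).
\end{equation*}
Each of the three pieces has a limit in the relevant regime once the boundary term is controlled, so the whole corollary comes down to establishing that $t^\nu f(t)\to 0$ at both endpoints; the behavior at $0$ is the standing hypothesis, and the behavior at $\infty$ is what Theorem~\ref{THM-main-functions} will supply.

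The first and main step is to show that $\psi(t):=t^{\nu-1}f(t)$ again lies in $GM$. Writing $d\psi=(\nu-1)t^{\nu-2}f(t)\,dt+t^{\nu-1}\,df(t)$, I would bound the total variation over $[x,2x]$ by $|\nu-1|\int_x^{2x}t^{\nu-2}|f(t)|\,dt+\int_x^{2x}t^{\nu-1}|df(t)|$, then use that $t^{\nu-1}\asymp x^{\nu-1}$ and $t^{\nu-2}\asymp x^{\nu-2}$ on the dyadic interval $[x,2x]$ (and on $[x/\lambda,\lambda x]$) to reduce both terms to a multiple of $x^{\nu-1}\int_{x/\lambda}^{\lambda x}|f(t)|/t\,dt$, invoking the $GM$ condition for $f$ on the second term. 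Since $x^{\nu-1}|f(t)|/t\asymp t^{\nu-1}|f(t)|/t=|\psi(t)|/t$ on the range of integration, this produces $\int_x^{2x}|d\psi(t)|\le C'\int_{x/\lambda}^{\lambda x}|\psi(t)|/t\,dt$, i.e. $\psi\in GM$ with the same $\lambda$. I expect this weight-invariance to be the point requiring the most care, precisely because one must track the product rule at the level of the variation measure and keep the comparability constants uniform in $x$; its payoff is that it holds for \emph{every} real $\nu\neq 0$, thereby avoiding the change of variables $s=t^\nu$, which for $\nu<0$ would interchange the roles of $0$ and $\infty$ and so would feed Theorem~\ref{THM-main-functions} the wrong endpoint.

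With $\psi\in GM$ in hand, the forward implication is immediate. Assuming $\int_0^\infty t^{\nu-1}f(t)\,dt$ converges, the function $\psi=t^{\nu-1}f$ has a convergent integral and inherits the structural hypotheses of Theorem~\ref{THM-main-functions}, which therefore gives $t\psi(t)=t^\nu f(t)\to 0$ as $t\to\infty$. Inserting this together with the hypothesis $t^\nu f(t)\to 0$ as $t\to 0$ into the identity above and letting $a\to 0$, $b\to\infty$ annihilates the boundary term; as the left-hand integral converges, the rearranged form shows $\int_0^\infty t^\nu\,df(t)$ converges and equals $-\nu\int_0^\infty t^{\nu-1}f(t)\,dt$.

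The converse requires no new tools. Assuming $t^\nu f(t)\to 0$ at infinity and convergence of $\int_0^\infty t^\nu\,df(t)$, the boundary term $\frac{1}{\nu}[t^\nu f(t)]_a^b$ again vanishes as $a\to 0$ and $b\to\infty$ (at $0$ by hypothesis, at $\infty$ by assumption), so the identity forces $\int_0^\infty t^{\nu-1}f(t)\,dt$ to converge to $-\frac1\nu\int_0^\infty t^\nu\,df(t)$, which is exactly the displayed formula. Throughout, the local integrability and local bounded variation of $f$ guarantee the validity of the integration by parts on each compact subinterval, and the existence of the one-sided limits of every term justifies passing to the limit in $a$ and $b$.
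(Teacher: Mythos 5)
Your proof is correct and follows essentially the same route as the paper: integrate by parts against $t^{\nu-1}=\frac{d}{dt}(t^\nu/\nu)$, kill the boundary term at $0$ by hypothesis and at $\infty$ by applying Theorem~\ref{THM-main-functions} to the $GM$ function $t^{\nu-1}f(t)$, and read off both implications and the displayed identity from the resulting formula. The only difference is that you actually sketch the verification that multiplication by a power weight preserves the $GM$ class, a fact the paper asserts without proof.
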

	
	The main goal of this paper is to show that if $f\in GM$  the boundedness of the function $H_\alpha f$ is equivalent to the uniform convergence of the partial integrals \eqref{EQpartialint}, which is not true in general. In fact, in Section~\ref{SEC4} we discuss the general case in detail for the cosine transform (as well as for the cosine series), and show the sharpness of our main results with respect to the general monotonicity assumption. 
	
	It is known that if $f\in GM$ and $t^{2\alpha+1}f(t)\in L^1(0,1)$, then the uniform convergence of \eqref{EQpartialint} is equivalent to the convergence of $\int_0^\infty t^{2\alpha+1}f(t)\, dt$  (see \cite{mio}, where such  a statement has been proved under the assumption that  Theorem~\ref{THM-main-functions} is true). Our main result reads as follows.
	\begin{theorem}
		\label{boundednesscos}
		Let $\alpha\geq -1/2$. Let $f\in GM$ be real-valued and such that $t^{2\alpha+1}f(t)\in L^1(0,1)$. The following are equivalent:
		\begin{enumerate}[label=(\roman{*})]
			\item The integral $\int_0^\infty t^{2\alpha+1}f(t)\, dt$ converges.
			\item The partial integrals $\int_0^N t^{2\alpha+1}f(t)j_\alpha(ut)\, dt$ converge uniformly in $u\in \R_+$ as $N\to \infty$.
			\item The function $H_\alpha f(u)=\int_0^\infty t^{2\alpha+1}f(t)j_\alpha(ut)\, dt$ is bounded on $\R_+$.
		\end{enumerate}
		Moreover, in any of those cases, $M_{2\alpha+2}(f):=\sup_{t\in \R_+}t^{2\alpha+2} |f(t)|$ is finite and for every $N\in \R_+$, the estimate
		\begin{align}
		\bigg| \int_0^N t^{2\alpha+1}f(t)j_\alpha(ut)\, dt\bigg|&\leq \bigg|\int_0^N t^{2\alpha+1}f(t)\, dt\bigg|+\frac{1}{\alpha+1}N^{2\alpha+2}|f(N)| \nonumber\\
		&\phantom{=} +\frac{C\lambda(2\lambda)^{2\alpha+2}}{2\alpha+2}\bigg(\frac{\lambda^4}{2(\alpha+2)}+\frac{S_\alpha}{\alpha+3/2}\bigg)M_{2\alpha+2}(f)\nonumber \\
		&\phantom{=} +\sup_{0\leq a<b\leq \infty}\bigg|\int_a^b \frac{t^{2\alpha+2}}{2\alpha+2}df(t)\bigg| \label{EQ-cossup}
		\end{align}
		holds, where $S_\alpha:=\sup_{x\geq 1}x^{\alpha+1/2}|j_\alpha(x)|$.
	\end{theorem}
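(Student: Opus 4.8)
The plan is to close the cycle (i)$\Rightarrow$(ii)$\Rightarrow$(iii) and (iii)$\Rightarrow$(i), proving the quantitative bound \eqref{EQ-cossup} along the way as the tool for (i)$\Rightarrow$(ii). Two links are immediate. For (ii)$\Rightarrow$(i), uniform convergence on $\R_+$ holds in particular at $u=0$, where $j_\alpha(0)=1$, so the partial integrals $\int_0^N t^{2\alpha+1}f(t)\,dt$ converge, which is (i) (this is the easy direction of the equivalence (i)$\iff$(ii) recalled from \cite{mio}). For (ii)$\Rightarrow$(iii), each $S_N(u):=\int_0^N t^{2\alpha+1}f(t)j_\alpha(ut)\,dt$ is bounded in $u$ (since $|j_\alpha|\le 1$ and $t^{2\alpha+1}f\in L^1(0,N)$), and a uniform limit of bounded functions is bounded, whence $H_\alpha f$ is bounded. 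The substantive parts are therefore (iii)$\Rightarrow$(i) and the estimate \eqref{EQ-cossup}.

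For (iii)$\Rightarrow$(i) the strategy is to read off, from the boundedness of $H_\alpha f$, precisely the two quantities that Corollary~\ref{CORrealtest-functions} (with $\nu=2\alpha+2$) turns into convergence of $\int_0^\infty t^{2\alpha+1}f$: namely $M_{2\alpha+2}(f)<\infty$ and the convergence of $\int_0^\infty t^{2\alpha+2}\,df$. (Its hypothesis $t^{2\alpha+2}f(t)\to 0$ as $t\to 0$ follows from $t^{2\alpha+1}f\in L^1(0,1)$ together with the $GM$ estimates of Lemma~\ref{LEMgmest}.) For the first I would localize: testing the transform at $u\asymp 1/t_0$ makes the block $t\asymp t_0$ contribute an amount comparable to $t_0^{2\alpha+2}|f(t_0)|$, since there $ut\asymp 1$ and the Bessel factor is bounded below, while Lemma~\ref{LEMgmest} dominates the complementary ranges and forbids cancellation; boundedness of $H_\alpha f$ then bounds $t_0^{2\alpha+2}|f(t_0)|$ uniformly in $t_0$. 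The convergence of $\int_0^\infty t^{2\alpha+2}\,df$ is extracted from a companion estimate as $u\to 0^+$, after which Corollary~\ref{CORrealtest-functions} yields (i). I expect this localization, together with the tail control by general monotonicity, to be the principal obstacle of the whole proof.

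To prove \eqref{EQ-cossup} (which re-proves (i)$\Rightarrow$(ii) quantitatively) I assume (i); by Corollary~\ref{CORrealtest-functions} this already gives $M_{2\alpha+2}(f)<\infty$ (as $t^{2\alpha+2}f(t)\to 0$ at both $0$ and $\infty$) and the finiteness of the last supremum in \eqref{EQ-cossup} (as $\int_0^\infty t^{2\alpha+2}\,df$ converges). The engine is the normalized Bessel identity $\frac{d}{dt}\bigl[t^{2\alpha+2}j_{\alpha+1}(ut)\bigr]=(2\alpha+2)\,t^{2\alpha+1}j_\alpha(ut)$. First I write $j_\alpha(ut)=1+(j_\alpha(ut)-1)$, which peels off the term $\int_0^N t^{2\alpha+1}f\,dt$, and integrate the oscillatory remainder by parts in the Riemann--Stieltjes sense (legitimate since $f$ is locally of bounded variation), using $t^{2\alpha+1}(j_\alpha(ut)-1)=\frac{1}{2\alpha+2}\frac{d}{dt}\bigl[t^{2\alpha+2}(j_{\alpha+1}(ut)-1)\bigr]$. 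The lower boundary term vanishes because $t^{2\alpha+2}f(t)\to 0$, and the upper one is at most $\frac{1}{\alpha+1}N^{2\alpha+2}|f(N)|$ by $|j_{\alpha+1}(uN)-1|\le 2$, giving the second summand of \eqref{EQ-cossup}.

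It remains to estimate the Stieltjes integral $\int_0^N t^{2\alpha+2}(j_{\alpha+1}(ut)-1)\,df(t)$, which I split at the transition point $t=1/u$. On $(0,1/u)$ the bound $|j_{\alpha+1}(ut)-1|\le (ut)^2/\bigl(4(\alpha+2)\bigr)$ from the alternating series, combined with the dyadic $GM$ estimate for $\int t^{2\alpha+4}|df|$ from Lemma~\ref{LEMgmest} and a geometric summation in which the powers of $u$ cancel, yields the $\lambda^4/\bigl(2(\alpha+2)\bigr)$ contribution to the $M_{2\alpha+2}(f)$-term. On $(1/u,N)$ the ``$-1$'' part is $\int_{1/u}^N t^{2\alpha+2}\,df$, absorbed into the last supremum, while for the genuinely oscillatory part I convert back to the original integrand and estimate directly via the decay bound $|j_\alpha(x)|\le S_\alpha x^{-(\alpha+1/2)}$ for $x\ge 1$ and $|f(t)|\le M_{2\alpha+2}(f)\,t^{-(2\alpha+2)}$, producing the $S_\alpha$ contribution. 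Collecting the four pieces gives \eqref{EQ-cossup}. Applying the same bound to tail integrals $\int_N^{N'}$ and letting $N\to\infty$ --- where, under (i), the boundary, $M_{2\alpha+2}(f)$- and variation terms all tend to $0$ --- gives the uniform Cauchy property and hence (ii), closing the cycle.
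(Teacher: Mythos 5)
Your treatment of the estimate \eqref{EQ-cossup} and of (i)$\Rightarrow$(ii) follows essentially the paper's own route: peel off $\int_0^N t^{2\alpha+1}f\,dt$, integrate the remainder by parts against $\frac{d}{dt}\big[t^{2\alpha+2}j_{\alpha+1}(ut)\big]=(2\alpha+2)t^{2\alpha+1}j_\alpha(ut)$, kill the lower boundary term via $t^{2\alpha+1}f\in L^1(0,1)$ and Lemma~\ref{LEMgmest}, split the Stieltjes integral at $t=1/u$, and use Lemma~\ref{LEM-Besselest} with the dyadic $GM$ estimate below $1/u$ and the decay bound \eqref{EQ-besselestinfity} with $GM$ above $1/u$. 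The paper treats $u\le 1/N$ as a separate (degenerate) case of the same split, and quotes \cite{mioHankel} for (i)$\iff$(ii) rather than re-running the bound on tails $\int_N^{N'}$; your tail argument is a reasonable self-contained substitute, since under (i) the localized boundary, $M$- and variation terms all tend to $0$ by Theorem~\ref{THM-main-functions} and Corollary~\ref{CORrealtest-functions}.

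The genuine problem is your (iii)$\Rightarrow$(i). First, it is unnecessary: statement (iii) asserts that $H_\alpha f(u)=\int_0^\infty t^{2\alpha+1}f(t)j_\alpha(ut)\,dt$ is bounded on $\R_+$, which presupposes that the improper integral converges at every $u$, in particular at $u=0$, where $j_\alpha(0)=1$; that convergence is literally (i). You use exactly this evaluation at $u=0$ for (ii)$\Rightarrow$(i), and the same one line is how the paper disposes of (iii)$\Rightarrow$(i). Second, as written your localization program is not a proof and it is doubtful it can be completed as described: the claim that testing at $u\asymp 1/t_0$ isolates a contribution comparable to $t_0^{2\alpha+2}|f(t_0)|$ because ``Lemma~\ref{LEMgmest} dominates the complementary ranges and forbids cancellation'' is unsubstantiated --- the ranges $ut\ll 1$ and $ut\gg 1$ contribute integrals that you cannot control without precisely the tail information ($t^{2\alpha+2}f(t)\to 0$ at infinity, convergence of $\int_0^\infty t^{2\alpha+2}\,df$) that you are trying to extract; and the ``companion estimate as $u\to 0^+$'' yielding convergence of $\int_0^\infty t^{2\alpha+2}\,df$ is never exhibited. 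You have therefore misidentified the principal obstacle: the hard direction is (i)$\Rightarrow$(iii), i.e.\ the estimate \eqref{EQ-cossup}, which you handle correctly, while (iii)$\Rightarrow$(i) is trivial and should simply be the $u=0$ observation.
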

	Note that for any $0\leq a<b$, the boundedness of the integral $\int_a^b t^{2\alpha+2} df(t)$ in \eqref{EQ-cossup} follows from the convergence of $\int_0^\infty t^{2\alpha+1}f(t)\, dt$ and Corollary~\ref{CORrealtest-functions}.
	
	It is obvious from the estimates for the Bessel function given in Section~\ref{SECbessel} that the conclusion Theorem~\ref{boundednesscos}  holds if $f\geq 0$.
	
	As an important example we mention the cosine transform, which is the Hankel transform of order $\alpha=-1/2$. For the sake of completeness, we give its corresponding version of Theorem~\ref{boundednesscos}.
	\begin{corollary}\label{CORcostrans}
		Let $f\in GM$ be real-valued and such that $f\in L^1(0,1)$. 
		The following are equivalent:
		\begin{enumerate}[label=(\roman{*})]
			\item The integral $\int_0^\infty f(t)\, dt$ converges.
			\item The partial integrals $\int_0^N f(t)\cos ut\, dt$ converge uniformly in $u\in \R_+$ as $N\to \infty$.
			\item The function $\widehat{f}_{\cos}(u)=\int_0^\infty f(t)\cos ut\, dt$ is bounded on $\R_+$.
		\end{enumerate}
		Moreover, in such case, $\sup_{t\in \R_+}t|f(t)|$ is finite and the estimate
		\begin{align*}
			\bigg| \int_0^N f(t)\cos ut\, dt\bigg|&\leq \bigg|\int_0^N f(t)\, dt\bigg|+2N|f(N)|+2C\lambda^2\bigg(\frac{\lambda^4}{3}+1\bigg)\sup_{t\in \R_+}t|f(t)|\nonumber \\
			&\phantom{=} +\sup_{0\leq a<b\leq \infty}\bigg|\int_a^b t \,df(t)\bigg|
		\end{align*}
		holds.
	\end{corollary}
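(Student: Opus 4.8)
The plan is to deduce Corollary~\ref{CORcostrans} directly from Theorem~\ref{boundednesscos} by specializing to the order $\alpha=-1/2$, which is admissible since the theorem is stated for all $\alpha\geq -1/2$. The crucial observation is that the normalized Bessel function of order $-1/2$ coincides with the cosine: from the classical identity $J_{-1/2}(x)=\sqrt{2/(\pi x)}\,\cos x$ one obtains $j_{-1/2}(x)=\cos x$. Consequently, since $2\alpha+1=0$, the weight $t^{2\alpha+1}$ disappears and $H_{-1/2}f(u)=\int_0^\infty f(t)\cos(ut)\,dt=\widehat{f}_{\cos}(u)$. Moreover the hypothesis $t^{2\alpha+1}f(t)=f(t)\in L^1(0,1)$ matches the hypothesis of the corollary, so the equivalence of (i)--(iii) is immediate.

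It then remains to transcribe the quantitative bound \eqref{EQ-cossup} under $\alpha=-1/2$. I would substitute this value and simplify each constant in turn: $\frac{1}{\alpha+1}=2$, $N^{2\alpha+2}=N$, $2\alpha+2=1$, $M_{2\alpha+2}(f)=\sup_{t\in \R_+}t|f(t)|$, and $\frac{t^{2\alpha+2}}{2\alpha+2}=t$. The prefactor becomes $\frac{C\lambda(2\lambda)^{2\alpha+2}}{2\alpha+2}=2C\lambda^2$, while inside the parentheses $\frac{\lambda^4}{2(\alpha+2)}=\frac{\lambda^4}{3}$ and $\frac{S_\alpha}{\alpha+3/2}=S_{-1/2}$, since $\alpha+3/2=1$.

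The only quantity requiring a small computation is $S_{-1/2}$. By definition $S_\alpha=\sup_{x\geq 1}x^{\alpha+1/2}|j_\alpha(x)|$, and since $\alpha+1/2=0$ and $j_{-1/2}=\cos$, this is simply $S_{-1/2}=\sup_{x\geq 1}|\cos x|=1$ (the value $1$ is attained, e.g., at $x=\pi$). Substituting $S_{-1/2}=1$ into the simplified bound yields exactly the displayed inequality of the corollary, with the constant $2C\lambda^2\bigl(\lambda^4/3+1\bigr)$ multiplying $\sup_{t\in \R_+}t|f(t)|$. The finiteness of $\sup_{t\in \R_+}t|f(t)|$ is inherited from the finiteness of $M_{2\alpha+2}(f)$ asserted in Theorem~\ref{boundednesscos}. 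Since every step is a direct substitution, no genuine obstacle arises; the proof is entirely a matter of bookkeeping once the identification $j_{-1/2}=\cos$ and the value $S_{-1/2}=1$ are in place.
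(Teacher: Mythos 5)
Your proposal is correct and matches the paper's intent exactly: the corollary is presented as the immediate specialization of Theorem~\ref{boundednesscos} to $\alpha=-1/2$ (using $j_{-1/2}(x)=\cos x$, noted in Section~\ref{SECbessel}), and no separate proof is given. All of your constant substitutions, including $S_{-1/2}=\sup_{x\geq 1}|\cos x|=1$, check out and reproduce the displayed estimate verbatim.
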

	
	We are also interested in the discrete part of Corollary~\ref{CORcostrans}. In \cite{TikJAT}, Tikhonov mentioned the following.
	\begin{thmletter}\label{THMuccosine}
		Let $\{a_n\}\in GMS$ be such that $n|a_n|\to 0$. Then the cosine series
		\begin{equation}
		\label{EQcosseries}
		\sum_{n=0}^\infty a_n\cos nx
		\end{equation}
		converges uniformly on $[0,2\pi]$ if and only if the series
		\begin{equation}
		\label{EQseries}
		\sum_{n=0}^\infty a_n
		\end{equation} 
		converges.
	\end{thmletter}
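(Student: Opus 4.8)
The plan is to prove the two implications separately, using the Cauchy criterion for uniform convergence. The implication that uniform convergence forces convergence of $\sum a_n$ is immediate: a uniformly convergent sequence of partial sums converges pointwise, and evaluating at $x=0$, where $\cos(n\cdot 0)=1$, shows that $\sum_{n=0}^\infty a_n$ converges. All the work lies in the converse, which is the discrete counterpart of Corollary~\ref{CORcostrans}. Assuming $\sum a_n$ converges, I would show that the tails $\sum_{n=N}^{M}a_n\cos nx$ tend to $0$ uniformly in $x$ as $N\to\infty$. By evenness and $2\pi$-periodicity of $\cos nx$ it suffices to treat $x\in(0,\pi]$. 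Throughout I write $\eta_n:=\sup_{k\geq n}k|a_k|$, so that $\eta_n\to 0$ by hypothesis.

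The splitting strategy is the classical Chaundy--Jolliffe one. For fixed $x\in(0,\pi]$ I introduce the integer $m=m(x):=\lfloor \pi/x\rfloor$, which satisfies $mx\leq\pi<(m+1)x$, and split the tail into a low-frequency block $N\leq n\leq\min(m,M)$ and a high-frequency block $\max(m+1,N)\leq n\leq M$ (one block being empty in the degenerate regimes $m\geq M$ or $m<N$). In the low block, where $nx\leq\pi$, I write $\cos nx=1-(1-\cos nx)$. The contribution of the $1$'s is a partial sum $\sum_{n=N}^{\min(m,M)}a_n$ of the convergent series, hence smaller than any prescribed $\epsilon$ once $N$ is large, by the Cauchy criterion. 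The remainder is controlled by $0\leq 1-\cos nx\leq (nx)^2/2$ together with $n^2|a_n|\leq n\,\eta_N$, which gives a bound of order $x^2\eta_N\sum_{n\leq m}n\lesssim \eta_N\,(mx)^2\leq \pi^2\eta_N$, uniformly in $x$.

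For the high block I would use summation by parts against the partial cosine sums $B_n=\sum_{k}\cos kx$, which obey the Dirichlet-type bound $|B_n|\leq 1/|\sin(x/2)|\leq \pi/x< m+1$ on $(0,\pi]$. Abel summation then produces a bound of the form $(m+1)\bigl(\sum_{n}|a_n-a_{n+1}|+|a_M|\bigr)$. Setting $m_0:=\max(m+1,N)$ and summing the defining $GMS$ inequality over the dyadic blocks $[2^{j}m_0,2^{j+1}m_0]$ collapses $\sum_{n\geq m_0}|a_n-a_{n+1}|$ into a constant multiple of $\sum_{k\geq m_0/\lambda}|a_k|/k$; invoking $k|a_k|\leq\eta_{m_0/\lambda}$ and comparing with $\sum_{k\geq m_0/\lambda}k^{-2}$ yields $\sum_{k\geq m_0/\lambda}|a_k|/k\lesssim \lambda\,\eta_{m_0/\lambda}/m_0$. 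Since $m+1\leq m_0$, the product is $\lesssim \lambda\,\eta_{m_0/\lambda}$, uniformly in $x$, while the boundary term satisfies $(m+1)|a_M|\leq M|a_M|\leq\eta_N$ because the high block is nonempty only when $m+1\leq M$.

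The main obstacle is precisely this high-frequency estimate: one must make the dyadic summation of the $GMS$ condition rigorous over the full range and, crucially, verify that the factor $m+1\approx 1/x$ coming from the Dirichlet kernel is exactly compensated by the decay $\sum_{k\geq m_0/\lambda}|a_k|/k\approx 1/m_0$ supplied by $n|a_n|\to 0$, so that no dependence on $x$ survives. Care is needed at the transition $m\approx N$, which is handled by first choosing $N_1$ with $\eta_{N_1}<\epsilon$ and then requiring $N\geq\lambda N_1$, so that $m_0/\lambda\geq N_1$ in both regimes $m\geq N$ and $m<N$; and in the degenerate regimes where one block is empty, the same two estimates apply verbatim. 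Adding the low- and high-frequency bounds then gives $\bigl|\sum_{n=N}^{M}a_n\cos nx\bigr|\lesssim \epsilon$ uniformly in $x\in(0,\pi]$, which is the desired uniform Cauchy condition.
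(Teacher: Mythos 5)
The paper does not actually prove Theorem~\ref{THMuccosine}; it is quoted from Tikhonov's work \cite{TikJAT} without proof, so there is no in-paper argument to compare against. Your proposal is correct and follows the standard route for this result: the Chaundy--Jolliffe splitting at $m\approx\pi/x$, with the low-frequency block controlled by $1-\cos nx\leq (nx)^2/2$ together with $n|a_n|\to 0$ and the Cauchy criterion for $\sum a_n$, and the high-frequency block controlled by Abel summation against the Dirichlet bound $1/\sin(x/2)\leq\pi/x$, the key point being that the dyadic summation of the $GMS$ inequality yields $\sum_{n\geq m_0}|a_n-a_{n+1}|\lesssim_{C,\lambda}\eta_{m_0/\lambda}/m_0$, which exactly cancels the factor $m+1\leq m_0$. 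All the delicate points (bounded overlap of the dyadic ranges $[2^jm_0/\lambda,\lambda 2^jm_0]$, the choice $N\geq\lambda N_1$ at the transition, the degenerate blocks) are correctly identified and handled, and your observation that $n|a_n|\to 0$ is needed only for the ``if'' direction matches Remark~\ref{REM-unifconvseries}.
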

		
	By means of a discrete version of Theorem~\ref{THM-main-functions} we are able to  improve the statement of Theorem~\ref{THMuccosine} by dropping the hypothesis $n|a_n|\to 0$, and eventually allows us to obtain equivalent conditions similarly as in Theorem~\ref{boundednesscos}.
	\begin{theorem}\label{CORunifconv}
		Let $\{a_n\}\in GM$ be real-valued. The following are equivalent.
		\begin{enumerate}[label=(\roman{*})]
			\item The series $\sum_{n=0}^\infty a_n$ converges.
			\item The series $\sum_{n=0}^N a_n\cos nx$ converges uniformly as $N\to \infty$.
			\item The function $\sum_{n=0}^\infty a_n\cos nx$ is bounded.
		\end{enumerate}
	\end{theorem}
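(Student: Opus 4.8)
The plan is to establish the cycle of implications (i) $\Rightarrow$ (ii) $\Rightarrow$ (iii) $\Rightarrow$ (i), with the only substantial input being a discrete analogue of Theorem~\ref{THM-main-functions}: if $\{a_n\}\in GMS$ and $\sum_{n=0}^\infty a_n$ converges, then $na_n\to 0$. This discrete Abel--Olivier test is exactly what allows one to remove the extraneous hypothesis $n|a_n|\to 0$ from Tikhonov's Theorem~\ref{THMuccosine}, after which the three conditions fall into place.

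For (i) $\Rightarrow$ (ii) I would argue as follows. Assuming that $\sum_{n=0}^\infty a_n$ converges, the discrete Abel--Olivier test yields $n|a_n|\to 0$. Since $\{a_n\}\in GMS$ and now $n|a_n|\to 0$, Theorem~\ref{THMuccosine} applies and tells us that the uniform convergence of $\sum_{n=0}^N a_n\cos nx$ on $[0,2\pi]$ is equivalent to the convergence of $\sum_{n=0}^\infty a_n$. The latter holds by hypothesis, so (ii) follows. By $2\pi$-periodicity, uniform convergence on $[0,2\pi]$ is the same as uniform convergence on all of $\R$.

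The remaining two implications are comparatively soft. For (ii) $\Rightarrow$ (iii), the partial sums $S_N(x)=\sum_{n=0}^N a_n\cos nx$ are trigonometric polynomials, hence continuous; a uniform limit of continuous functions is continuous, and a continuous $2\pi$-periodic function is bounded on $\R$. For (iii) $\Rightarrow$ (i) I would simply evaluate at the origin: for the sum function to be a well-defined bounded function on $\R_+\ni 0$, the series must converge at $x=0$, where $\cos(n\cdot 0)=1$ and the series is precisely $\sum_{n=0}^\infty a_n$; its convergence is exactly (i). Note that here the oscillation of $\cos nx$ which ensures convergence for $x\neq 0$ is absent, so it is precisely the value at the origin that ties boundedness to the convergence of the plain series.

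The main obstacle is therefore not in this theorem itself, whose proof is a clean assembly, but in the discrete Abel--Olivier test that powers it; I expect its proof to mirror that of Theorem~\ref{THM-main-functions}, transcribing the $GM$ estimates into their $GMS$ counterparts and handling the real-valued (possibly sign-changing) case through the general monotonicity inequality for sequences. Once that test is in hand, the only points requiring care are bookkeeping: reading the hypothesis ``$\{a_n\}\in GM$'' in the statement as $\{a_n\}\in GMS$, and identifying the uniform-convergence set in (ii) with $[0,2\pi]$ so that Theorem~\ref{THMuccosine} can be invoked verbatim.
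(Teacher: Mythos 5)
Your proposal is correct and follows essentially the same route as the paper: derive $n|a_n|\to 0$ from (i) via the discrete Abel--Olivier test (Corollary~\ref{CORgms}), invoke Theorem~\ref{THMuccosine} for the equivalence with uniform convergence, obtain (iii) from (ii) by continuity and periodicity of the uniform limit, and close the cycle by evaluating at $x=0$. Your remark that the hypothesis should be read as $\{a_n\}\in GMS$ also matches the paper's intent.
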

	Observe Theorem~\ref{CORunifconv} is trivial if $a_n\geq 0$.
		
	The paper is organized as follows. In Section~\ref{SECbessel}, we state some of the basic properties of the Bessel functions $j_\alpha$ that we use later. We also prove upper and lower estimates for $j_\alpha(t)$, valid for small $t$. In Section~\ref{SEC3}, we prepare the machinery needed to prove Theorem~\ref{boundednesscos}. In particular, Theorem~\ref{THM-main-functions} and its analogue for $GMS$ are proved, as well as Corollary~\ref{CORrealtest-functions}. Finally, in Section~\ref{SEC4} we  prove Theorems~\ref{boundednesscos}~and~\ref{CORunifconv}, and describe which implications do and do not hold in the general case.

	\section{Bessel functions}\label{SECbessel}
	We first present some known properties of the Bessel functions (which can be found in \cite[Ch. VII]{EMOT}), as well as an auxiliary lemma that will be useful later. Throughout this section we let $x\in \R_+$. For $\alpha\geq -1/2$, the \textit{normalized Bessel function of order $\alpha$} is defined as the power series 
	\begin{equation}
	\label{EQ-besselseries}
	j_\alpha(x)=\Gamma(\alpha+1)\sum_{n=0}^\infty \frac{(-1)^n (x/2)^{2n}}{n!\Gamma(n+\alpha+1)},
	\end{equation}
	where $\Gamma$ denotes the Euler gamma function. Such series converges uniformly and absolutely on any bounded interval.	We recall that $j_{-1/2}(x)=\cos x$.
	
	The following property concerning the derivatives of $j_\alpha$ is satisfied:
	\begin{equation}
	\label{EQ-derivative-bessel}
	\frac{d}{dx}\Big(x^{2\alpha+2}j_{\alpha+1}(x)\Big)=(2\alpha+2)x^{2\alpha+1}j_\alpha(x).
	\end{equation}
	For any $\alpha\geq -1/2$ and any $x\geq 0$, one has
	\begin{equation*}
	\label{EQbesselatzero}
	|j_\alpha(x)|\leq j_\alpha(0)=1.
	\end{equation*}
	Finally, we have the following estimate for $x\geq 1$:
	\begin{equation}
	\label{EQ-besselestinfity}
	|j_{\alpha}(x)|\leq S_\alpha x^{-\alpha-1/2}.
	\end{equation}
	\begin{remark}
		We refer the reader to \cite{olenko}, where sharp upper bounds for $S_\alpha$ are obtained. It is known that for $\alpha>1/2$, $S_\alpha$ is strictly increasing to infinity as a function of $\alpha$ and the supremum is attained at the first maximum of the function $x^\alpha j_\alpha(x)$ (see \cite{landau}). Also, it is shown in \cite{olenko} that
		$$\lim_{\alpha\to \infty} \frac{S_\alpha}{\alpha^{1/6}2^\alpha\Gamma(\alpha+1)}=0.6748\ldots$$
	\end{remark}
		
	We now prove upper and lower estimates for the Bessel function near the origin, based on its expansion as power series.
	\begin{lemma}
		\label{LEM-Besselest}
		Let $\alpha\geq -1/2$. For every $x\leq 2\sqrt{\alpha+1}$ and every $m\in \N\cup \{0\}$ there holds
		$$
		\Gamma(\alpha+1)\sum_{n=0}^{2m+1} \frac{(-1)^n (x/2)^{2n}}{n!\Gamma(n+\alpha+1)} \leq j_\alpha(x)\leq \Gamma(\alpha+1)\sum_{n=0}^{2m} \frac{(-1)^n (x/2)^{2n}}{n!\Gamma(n+\alpha+1)}.
		$$
	\end{lemma}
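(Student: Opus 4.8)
The plan is to recognize the defining series \eqref{EQ-besselseries} as an alternating series and to apply the Leibniz criterion, whose standard consequence is precisely the claimed sandwich between consecutive partial sums. Setting
\[
c_n := \frac{(x/2)^{2n}}{n!\,\Gamma(n+\alpha+1)} \geq 0,
\]
we have $j_\alpha(x) = \Gamma(\alpha+1)\sum_{n=0}^\infty (-1)^n c_n$, and since $\Gamma(\alpha+1)>0$ it suffices to establish the corresponding inequalities for the partial sums of $\sum_{n=0}^\infty (-1)^n c_n$.

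The key step is to verify that $\{c_n\}$ is non-increasing under the hypothesis $x \leq 2\sqrt{\alpha+1}$. Computing the ratio of consecutive terms gives
\[
\frac{c_{n+1}}{c_n} = \frac{(x/2)^2}{(n+1)(n+\alpha+1)},
\]
and since each factor $(n+1)$ and $(n+\alpha+1)$ is positive and increasing in $n$ (recall $\alpha+1\geq 1/2>0$), the denominator is minimized over $n\geq 0$ at $n=0$, where it equals $\alpha+1$. Hence $c_{n+1}\leq c_n$ for all $n\geq 0$ as soon as $(x/2)^2\leq \alpha+1$, which is exactly the hypothesis $x\leq 2\sqrt{\alpha+1}$. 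Together with $c_n\to 0$ (guaranteed by the absolute convergence of \eqref{EQ-besselseries} asserted just after its definition), this shows that $\sum_{n=0}^\infty(-1)^n c_n$ is an alternating series satisfying the hypotheses of the Leibniz criterion.

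It then remains to invoke the classical fact that for such a series the odd partial sums increase to the limit while the even partial sums decrease to it, that is,
\[
\sum_{n=0}^{2m+1}(-1)^n c_n \;\leq\; \sum_{n=0}^\infty (-1)^n c_n \;\leq\; \sum_{n=0}^{2m}(-1)^n c_n
\]
for every $m\in\N\cup\{0\}$. Multiplying through by $\Gamma(\alpha+1)>0$ yields the assertion. I do not anticipate any genuine obstacle in this argument: the only point requiring attention is that the binding case of the monotonicity condition occurs precisely at $n=0$, which is exactly where the hypothesis $x\leq 2\sqrt{\alpha+1}$ becomes sharp; for larger $n$ the required inequality holds with room to spare.
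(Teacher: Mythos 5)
Your proposal is correct and follows essentially the same route as the paper: both verify that the terms of the defining power series are non-increasing (with the binding case at $n=0$ giving exactly the condition $x\leq 2\sqrt{\alpha+1}$) and then invoke the standard alternating-series sandwich between consecutive partial sums. No issues.
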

	\begin{proof}
		The proof relies on the fact that for every alternating series $\sum_{n=0}^\infty (-1)^n a_n$ with terms $a_n$ decreasing to zero, the estimate
		$$
		\sum_{n=0}^{2m+1}(-1)^n a_n\leq \sum_{n=0}^\infty (-1)^n a_n\leq \sum_{n=0}^{2m} (-1)^n a_n
		$$
		holds for every $m\in \N\cup \{0\}$. Thus, the result follows if we just prove that for any fixed $x\leq 2\sqrt{\alpha+1}$, the terms of the series \eqref{EQ-besselseries} are decreasing to zero (in absolute value). That is equivalent to say
		$$
		\frac{(x/2)^{2n}}{n! \Gamma(n+\alpha+1)}-\frac{(x/2)^{2n+2}}{(n+1)! \Gamma(n+\alpha+2)}\geq 0.
		$$
		Routine simplifications show that the above inequality is equivalent to 
		$$
		x\leq 2\sqrt{(n+1)(n+\alpha+1)},
		$$
		and the latter holds for every $n\in \N\cup\{0\}$ if and only if it holds for $n=0$, i.e., if and only if $x\leq 2\sqrt{\alpha+1}$.
	\end{proof}
	\section{Abel-Olivier test for $GM$ functions and sequences}\label{SEC3}
	In order to prove Theorem~\ref{THM-main-functions} we adapt two lemmas  obtained by Dyachenko and Tikhonov in \cite{DTgmlipschitz} to the framework of $GM$ functions. Let us define, for any function $f$ and any  $n\in \N$,
	\begin{align*}
	A_n&:=\sup_{2^n\leq t\leq 2^{n+1}}|f(t)|,\\
	B_n&:=\sup_{2^{n-2\nu}\leq t\leq 2^{n+2\nu}}|f(t)|.
	\end{align*}
	
	For $n\in \N\cup \{0\}$, we say that $n$ is a \textit{good} number if either $n=0$ or $B_{n}\leq 2^{4\nu}A_{n}$. The rest of natural numbers consists of \textit{bad} numbers. Recall the parameter $\nu$ comes from the $GM$ condition.
	
	To illustrate such definitions we give a couple of examples. On the one hand, if $f(t)=1/t^2$ for $t\geq 1$, since
	$$
	A_n=\frac{1}{2^{2n}}, \qquad B_n= \frac{1}{2^{2n-4\nu}},
	$$
	then $B_n=2^{4\nu}A_n$, and all natural numbers $n$ (associated to $f$) are good. On the other hand, if $f(t)=1/t^3$ for $t\geq 1$, since
	$$
	A_n=\frac{1}{2^{3n}}, \qquad B_n= \frac{1}{2^{3n-6\nu}},
	$$
	then $B_n=2^{6\nu}A_n\not\leq 2^{4\nu}A_n$, thus all natural numbers $n$ are bad. More generally, if $f$ decreases rapidly enough (faster than $1/t^2$, as for instance $1/t^3$ or $e^{-t}$), then all numbers $n\neq 0$ associated to $f$ are bad.
	\begin{lemma}
		\label{LEMgood1-functions}
		Let $f$ be a $GM$ function. For any good number $n> 0$, there holds
		\begin{equation}
		\label{EQgoodlemma1-functions}
		|E_n|:=\bigg| \bigg\{ x\in[2^{n-\nu},2^{n+\nu}]: |f(x)|>\frac{A_n}{8C2^{2\nu}}\bigg\}\bigg|\geq   \frac{2^n}{8C 2^{5\nu}},
		\end{equation}
		where $|E|$ denotes the Lebesgue measure of $E$ and $C$ is the constant from the $GM$ condition.
	\end{lemma}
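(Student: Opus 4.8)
The plan is to argue by contradiction: assuming $|E_n|<\frac{2^n}{8C2^{5\nu}}$, I would derive two incompatible estimates for the total variation $\int_{2^n}^{2^{n+1}}|df(t)|$. Throughout I may assume $A_n>0$, since if $A_n=0$ then the goodness of $n$ forces $B_n=0$, so $f$ vanishes on $[2^{n-2\nu},2^{n+2\nu}]$ and the situation is degenerate. The key point is that the threshold $\frac{A_n}{8C2^{2\nu}}$ defining $E_n$ and the target measure $\frac{2^n}{8C2^{5\nu}}$ are tuned so that the $GM$ condition produces a \emph{small} upper bound for the variation on $[2^n,2^{n+1}]$, whereas the defining property of $A_n$ as a supremum produces a comparatively \emph{large} lower bound.

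For the upper bound I would invoke the $GM$ condition at $x=2^n$ (recall $\lambda=2^\nu$), giving
\[
\int_{2^n}^{2^{n+1}}|df(t)|\le C\int_{2^{n-\nu}}^{2^{n+\nu}}\frac{|f(t)|}{t}\,dt,
\]
and split the right-hand integral over $E_n$ and over its complement $F_n=[2^{n-\nu},2^{n+\nu}]\setminus E_n$. On $E_n$ I would use $|f(t)|\le B_n$ (as $E_n\subset[2^{n-2\nu},2^{n+2\nu}]$), the goodness bound $B_n\le 2^{4\nu}A_n$, the estimate $t\ge 2^{n-\nu}$, and the assumed smallness of $|E_n|$, to obtain $\int_{E_n}\frac{|f|}{t}\,dt<\frac{A_n}{8C}$. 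On $F_n$ I would use the defining bound $|f(t)|\le\frac{A_n}{8C2^{2\nu}}$ together with $t\ge 2^{n-\nu}$ and $|F_n|<2^{n+\nu}$ to obtain $\int_{F_n}\frac{|f|}{t}\,dt\le\frac{A_n}{8C}$. Adding these and multiplying by $C$ yields $\int_{2^n}^{2^{n+1}}|df(t)|<\frac{A_n}{4}$.

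For the lower bound I would exploit that $A_n=\sup_{[2^n,2^{n+1}]}|f|$: since $f$ is of bounded variation this supremum is approached, so there is $t^*\in[2^n,2^{n+1}]$ with $|f(t^*)|>\frac{A_n}{2}$. On the other hand, the subset of $[2^n,2^{n+1}]$ on which $|f|>\frac{A_n}{8C2^{2\nu}}$ is contained in $E_n$ (because $[2^n,2^{n+1}]\subset[2^{n-\nu},2^{n+\nu}]$), hence has measure $<\frac{2^n}{8C2^{5\nu}}<2^n$; its complement in $[2^n,2^{n+1}]$ is therefore nonempty, so I can pick $s\in[2^n,2^{n+1}]$ with $|f(s)|\le\frac{A_n}{8C2^{2\nu}}$. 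The variation over the subinterval with endpoints $s$ and $t^*$ is at least $\big||f(t^*)|-|f(s)|\big|>\frac{A_n}{2}-\frac{A_n}{8C2^{2\nu}}>\frac{A_n}{4}$ (using $8C2^{2\nu}>4$). Since this subinterval lies inside $[2^n,2^{n+1}]$, we get $\int_{2^n}^{2^{n+1}}|df(t)|>\frac{A_n}{4}$, contradicting the upper bound.

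The routine part is the bookkeeping of constants in the two integral estimates. The step I expect to require the most care is the lower bound: the smallness of $|E_n|$ must be converted into the genuine \emph{existence} of a point $s$ with small $|f(s)|$ sitting inside the same interval as the near-maximal value $|f(t^*)|$, so that a single large jump is forced into the variation. Matching the two resulting bounds (strictly below $A_n/4$ against strictly above $A_n/4$) is precisely what pins down the constants appearing in the statement.
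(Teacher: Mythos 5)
Your proof is correct and follows essentially the same route as the paper: the same contradiction setup, the same splitting of $\int_{2^{n-\nu}}^{2^{n+\nu}}|f(t)|/t\,dt$ over $E_n$ and its complement using the goodness bound $B_n\le 2^{4\nu}A_n$, leading to the same variation bound $\int_{2^n}^{2^{n+1}}|df|<A_n/4$. The only (cosmetic) difference is the endgame: the paper deduces $|f(x)|>A_n/2$ for \emph{every} $x\in[2^n,2^{n+1}]$ and contradicts the assumed smallness of $|E_n|$ directly, while you contradict the variation bound by exhibiting one near-maximal point and one sub-threshold point in $[2^n,2^{n+1}]$; these are two equivalent readings of the same triangle inequality.
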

	\begin{proof}
		The proof just consists on rewriting that of \cite[Lemma~2.1]{DTgmlipschitz} (dealing with sequences) in the context of functions. Assume \eqref{EQgoodlemma1-functions}	does not hold for $n> 0$. Let us define $D_n:=[2^{n-\nu},2^{n+\nu}]\backslash E_n$. Then, since $n$ is good,
		\begin{align*}
		\int_{2^{n-\nu}}^{2^{n+\nu}} \frac{|f(x)|}{x}\, dx& =\int_{D_n} \frac{|f(x)|}{x}\, dx + \int_{E_n} \frac{|f(x)|}{x}\, dx\\
		&\leq \frac{2^{n+\nu}A_n}{8C2^{2\nu}2^{n-\nu}}+\frac{2^n B_n}{8C2^{5\nu}2^{n-\nu}}= \frac{B_n}{8C 2^{4\nu}}+\frac{A_n}{8C}\leq \frac{A_n}{4C}.
		\end{align*}
		The $GM$ condition implies that for any $x\in [2^n,2^{n+1}]$, 
		$$
		|f(x)|\geq A_n-\int_{2^n}^{2^{n+1}}|df(t)|\geq A_n -C\int_{2^{n-\nu}}^{2^{n+\nu}} \frac{|f(t)|}{t}\, dt \geq A_n - \frac{A_n}{4}>\frac{A_n}{2},
		$$
		which contradicts our assumption.
	\end{proof}
	Before stating the next lemma, let us introduce the following notation:
	$$
	E_n^+:=\{ x\in E_n : f(x)>0\}, \qquad E_n^-:=\{ x\in E_n: f(x)\leq 0\}.
	$$
	\begin{lemma}
		\label{LEMgood2-functions}
		Let $f\in GM$ be real-valued. For any good number $n> 0$ there is an interval $(\ell_n,m_n)\subset [2^{n-\nu},2^{n+\nu}]$ such that at least one of the following holds:
		\begin{enumerate}
			\item for any $x\in (\ell_n,m_n)$, there holds $f(x)\geq 0$ and
			$$
			|E_n^+ \cap (\ell_n,m_n)|\geq \frac{2^n}{256C^3 2^{15\nu}};
			$$
			\item for any  $x\in (\ell_n,m_n)$, there holds $f(x)\leq 0$ and
			$$
			|E_n^- \cap (\ell_n,m_n)|\geq \frac{2^n}{256C^3 2^{15\nu}},
			$$
		\end{enumerate}
		where $C$ is the constant from the $GM$ condition.
	\end{lemma}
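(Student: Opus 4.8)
The plan is to combine the measure estimate from Lemma~\ref{LEMgood1-functions} with a bound on the total variation of $f$ coming from the $GM$ condition, and then to count how many times $f$ can change sign inside $[2^{n-\nu},2^{n+\nu}]$. Write $\delta:=A_n/(8C2^{2\nu})$ for the threshold appearing in Lemma~\ref{LEMgood1-functions}, so that $E_n^+=\{x\in[2^{n-\nu},2^{n+\nu}]:f(x)>\delta\}$ and $f<-\delta$ on $E_n^-$. Since $E_n=E_n^+\cup E_n^-$, one of these two sets has measure at least $\tfrac12|E_n|\geq 2^n/(16C2^{5\nu})$; by the symmetry of the two alternatives I may assume it is $E_n^+$, and aim to produce case~(1).

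First I would bound the total variation $V:=\int_{2^{n-\nu}}^{2^{n+\nu}}|df(t)|$. Splitting $[2^{n-\nu},2^{n+\nu}]$ into the $2\nu$ dyadic blocks $[2^k,2^{k+1}]$ with $n-\nu\leq k\leq n+\nu-1$ and applying the $GM$ condition to each (recall $\lambda=2^\nu$), I get $\int_{2^k}^{2^{k+1}}|df|\leq C\int_{2^{k-\nu}}^{2^{k+\nu}}|f(t)|t^{-1}\,dt$. Since $[2^{k-\nu},2^{k+\nu}]\subseteq[2^{n-2\nu},2^{n+2\nu}]$, where $|f|\leq B_n$, each block contributes at most $C B_n\cdot 2\nu\log 2$, whence $V\leq 4C\nu^2(\log 2) B_n$. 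As $n$ is good, $B_n\leq 2^{4\nu}A_n$, and therefore $V\leq 4C\nu^2(\log 2)2^{4\nu}A_n$.

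Next I would count sign-constant intervals. Consider the maximal subintervals of $[2^{n-\nu},2^{n+\nu}]$ on which $f\geq 0$, and call one \emph{relevant} if it meets $E_n^+$. If $I$ and $I'$ are consecutive relevant intervals, then between a point of $I\cap E_n^+$ (where $f>\delta$) and a point of $I'\cap E_n^+$ the function $f$ must drop below $0$ and climb back above $\delta$, contributing more than $2\delta$ to $V$; hence the number $K$ of relevant intervals satisfies $K\leq 1+V/(2\delta)$. Inserting the bounds on $V$ and $\delta$ gives $K\leq 1+16C^2\nu^2(\log 2)2^{6\nu}$, which, using $\nu\geq 1$ and $C>1$, is at most $16C^22^{10\nu}$. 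Since every point of $E_n^+$ lies in a relevant interval, the pigeonhole principle produces one such interval $(\ell_n,m_n)$ with $|E_n^+\cap(\ell_n,m_n)|\geq |E_n^+|/K\geq 2^n/(256C^32^{15\nu})$, on whose interior $f\geq 0$; this is exactly case~(1). If instead $|E_n^-|\geq\tfrac12|E_n|$, the identical argument applied to $\{f\leq 0\}$ yields case~(2).

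The main obstacle I expect is making the sign-change count fully rigorous for a merely $BV$ function: the maximal $\{f\geq 0\}$-intervals and the ``drop below $0$ and climb back'' step must be justified for a function that may have jumps and oscillate near $0$, rather than for a continuous $f$. This is precisely the point where the variation estimate of the second paragraph does the work, and where I would follow the scheme of \cite[Lemma~2.2]{DTgmlipschitz}, transcribing their sequence argument to the continuous setting. The remaining constant-chasing is routine once the slack $\nu^2\log 2\leq 2^{4\nu}$ (valid for $\nu\geq 1$) is noted.
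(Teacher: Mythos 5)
Your proof is correct and follows essentially the same route as the paper: the measure lower bound from Lemma~\ref{LEMgood1-functions}, the variation upper bound from the $GM$ condition together with $B_n\leq 2^{4\nu}A_n$, a count of sign-alternations against that variation, and a final pigeonhole — with the constants landing exactly on $2^n/(256C^32^{15\nu})$. The only (cosmetic) difference is bookkeeping: you partition $\{f\geq 0\}$ into maximal intervals and charge $2\delta$ per gap, while the paper runs a greedy left-to-right construction of intervals padded by $2^n/(256C^32^{15\nu})$ and charges $\delta$ per interval, losing the padding at the end; both devices resolve the rigor issue you flag, so no gap remains.
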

	\begin{proof}
		First of all, note that by Lemma~\ref{LEMgood1-functions} one has that either $|E_n^+|\geq \dfrac{2^n}{16C2^{5\nu}}$ or $|E_n^-|\geq \dfrac{2^n}{16C2^{5\nu}}$. We assume the former, and prove that item 1. holds.
		
		Let us construct a system of disjoint intervals $\{I_j=[s_j,t_j]\}_{j=1}^{p_n}$ in $\big[ 2^{n-\nu},2^{n+\nu}+\frac{2^n}{256C^3 2^{15\nu}}\big]$, as follows: Let $s_1=\inf E_n^+$, and
		$$
		\tau_1=\inf\{x\in [s_1,2^{n+\nu}]: f(x)\leq 0\}.
		$$
		If such $\tau_1$ does not exist, then we simply let $t_1=2^{n+\nu}$ and finish the process.	Contrarily, we define
		$$
		t_1=\tau_1+\frac{2^n}{256C^32^{15\nu}}.
		$$
		Once we have the first interval $I_1=[s_1,t_1]$, if $|E_n^+\backslash I_1|>0$, we let $s_2=\inf E_n^+\backslash I_1$, and define $\tau_2$ similarly as above, thus obtaining a new interval $I_2=[s_2,t_2]$. We continue this process until our collection of intervals is such that
		$$
		|E_n^+\backslash (I_1\cup I_2\cup \cdots \cup I_{p_n})|=0.
		$$ 
				
		By construction, for any $1\leq j\leq p_n-1$, we can find $y_j\in [s_j,\tau_j]$ such that $y_j\in E_n^+$, and $z_j\in [\tau_j,t_j]$ such that $f(z_j)\leq 0$. Thus,
		$$
		\int_{I_j}|df(t)|=\int_{s_j}^{t_j}|df(t)|\geq f(y_j)-f(z_j)\geq f(y_j)>\frac{A_n}{8C2^{2\nu}}.
		$$
		Hence,
		$$
		\int_{2^{n-\nu}}^{2^{n+\nu}}|df(t)|\geq \sum_{j=1}^{p_n-1}\int_{I_j}|df(t)|\geq (p_n-1)\frac{A_n}{8C2^{2\nu}}.
		$$
		On the other hand, the $GM$ property and the fact that $n$ is good imply that
		\begin{align*}
		\int_{2^{n-\nu}}^{2^{n+\nu}} |df(t)|&\leq C2\nu \int_{2^{n-2\nu}}^{2^{n+2\nu}} \frac{|f(x)|}{x}\, dx \leq C2\nu B_n\int_{2^{n-2\nu}}^{2^{n+2\nu}} \frac{1}{x}\, dx\\
		&= C2\nu B_n \log 2^{4\nu} \leq C2^{4\nu}8\nu^2 A_n\log 2  \leq C 2^{7\nu}A_n.
		\end{align*}		
		We can deduce from the above estimates that
		$$
		p_n \leq 8C^22^{9\nu} +1 \leq 8C^2 2^{10\nu}.
		$$
		By the pigeonhole principle (or Dirichlet's box principle), there is an integer $j$ such that 
		$$
		|E_n^+ \cap I_j|\geq \frac{2^n}{128 C^3 2^{15\nu}}.
		$$
		Given this $j$, we set $(\ell_n,m_n)=(s_j,t_j-\frac{2^n}{256C^3 2^{15\nu}})=(s_j,\tau_j)\subset [2^{n-\nu},2^{n+\nu}]$, and we are done.
	\end{proof}
	We are in a position to prove Theorem~\ref{THM-main-functions}.
		\begin{proof}[Proof of Theorem~\ref{THM-main-functions}]
			Recall that the convergence of \eqref{EQintegrals}  is equivalent to
			$$
			\bigg|\int_M^N f(t)\, dt\bigg| \to 0\qquad \text{as }N>M\to \infty.
			$$
			We distinguish two cases, namely if there are finitely or infinitely many good numbers. Assume first there are infinitely many. 
			
			For any good number $n>0$, it follows from Lemma~\ref{LEMgood2-functions} that
			$$
			2^nA_n\frac{1}{2048 C^4 2^{17\nu}}<\bigg| \int_{\ell_n}^{m_n}f(t)\, dt\bigg|,
			$$
			and moreover $f(x)\geq 0$ (or $f(x)\leq 0$) for all  $x\in (\ell_n,m_n)\subset [2^{n-\nu},2^{n+\nu}]$. Since the integrals $\int_{\ell_n}^{m_n} f(t)\, dt$ vanish as $n\to\infty$ (by the convergence of \eqref{EQintegrals}) we deduce that
			\begin{equation}
			\label{EQ-goodinfty}
			2^n A_n\to 0 \qquad \text{as }n\to \infty, \, n\text{ good}.
			\end{equation}
			
			We now prove that $2^n A_n$ also vanishes as $n\to \infty$ whenever $n$ is bad. If $n$ is a bad number, then $A_n<2^{-4\nu}B_n$, and $B_n=A_{s_1}$, with $|n-s_1|\leq 2\nu$. Let us first suppose that $s_1<n$ and find the largest good number $m$ which is smaller than $n$. If there is any good number in the set $\{s_1,s_1+1,\ldots , n-1\}$, we just choose $m$ to be the largest good number from such a set and conclude the procedure. On the contrary, $s_1$ is a bad number. Then there exists $s_2$ satisfying $|s_1-s_2|\leq 2\nu$ such that $A_{s_1}<2^{-4\nu}B_{s_1}=2^{-4\nu}A_{s_2}$. Also, note that $s_2<s_1$; the opposite is not possible, as it would imply $A_{s_1}<B_n$, which is a contradiction. Similarly as before, if there is any good number in $\{s_2,s_2+1,\ldots , s_1-1\}$, we choose $m$ to be the largest good number from such a set and we are done.
			
			Repeating this process, we arrive at a finite sequence $n=s_0>s_1>\cdots >s_{j-1}>s_j$, $j\geq 1$, where all the numbers in the set $\{s_{j-1},s_{j-1}+1,\ldots , s_0\}$ are bad, and there exists a good number $m$ satisfying $s_j\leq m<s_{j-1}$ (fix it to be the largest from $\{s_j,s_{j}+1,\ldots ,s_{j-1}-1\}$). Note that $A_{s_k}<2^{-4\nu}A_{s_{k+1}}$ and $|s_k-s_{k+1}|\leq 2\nu$ for any $0\leq k\leq j-1$, thus $n\leq s_j+2j\nu$. Also, the number $m$ obtained by this procedure tends to infinity whenever $n\to \infty$, since there are infinitely many good numbers. Then, since $m$ is good, 
			$$
			2^{n}A_n<2^{n-4\nu}A_{s_1}<\cdots <2^{n-4j\nu}A_{s_j}\leq 2^{s_j-2j\nu}A_{s_j}\leq 2^{m-2j\nu} A_{m}\leq 2^{2\nu} 2^{m}A_{m}\to 0
			$$ 
			as $n \to\infty$, by \eqref{EQ-goodinfty}.
			
			Suppose now that $n<s_1$. Then either there is a good number $m$ such that $n<m\leq s_1$, or $s_1$ is bad, in which case we can find $s_2<s_1$ such that $|s_2-s_1|\leq 2\nu$ and $A_{s_1}<2^{-4\nu}A_{s_2}$ (note that the case $s_2<s_1$ is not possible, since it leads to a contradiction as above). Similarly as before, we iterate the procedure until we find a set $\{s_{j-1},s_{j-1}+1,\ldots ,s_j\}$ that contains at least one good number. Since there are infinitely many good numbers, we arrive at a finite sequence $n=s_0<s_1<\cdots <s_{j-1}<s_j$, where the numbers in the set $\{s_{0},s_0+1,\ldots s_{j-1}\}$ are bad, and there is a good number $m$ such that $s_{j-1}<m\leq s_j$. Fix $m$ to be any good number from $\{s_{j-1}+1,s_{j-1}+2,\ldots , s_j\}$. Since  $n<m$, we have
			$$
			2^{n}A_n<2^{n-4\nu}A_{s_1}<\cdots< 2^{n-4j\nu}A_{s_j}< 2^{m-4j\nu}B_m\leq 2^mA_m \to 0
			$$
			as $n\to \infty$, by \eqref{EQ-goodinfty}.
			
			Assume now there are finitely many good numbers $n$. Assume that $N\in \N$ is such that $m\leq N$ for all good numbers $m$. If $n>N$, then $n$ is a bad number, so that $A_n<2^{-4\nu}B_n$, and $B_n=A_{s_1}$ for some $s_1$ satisfying $|n-s_1|\leq 2\nu$. If $s_1<n$, one can find, in a similar way as above, a sequence $n=s_0>s_1>\cdots >s_{j-1}>s_j$, where $s_0,s_1,\ldots ,s_{j-1}$ are bad and $s_j$ is good, and moreover $n\leq s_j+2j\nu$. Since $s_j\leq N$,
			\begin{equation}
			\label{EQineqI}
			j\geq \frac{n-s_j}{2\nu}\geq \frac{n-N}{2\nu},
			\end{equation}
			and we deduce
			$$
			2^{n} A_n<2^{n-4\nu}A_{s_1}<\cdots<2^{n-4j\nu}A_{s_j}\leq 2^{s_j-2j\nu}A_{s_j}\leq 2^{N-2j\nu}\max_{0\leq k\leq N}A_k.
			$$
			The latter vanishes as $n\to \infty$, since in such a case $j\to \infty$, by \eqref{EQineqI}. 
			
			Finally, we are left to investigate the case $s_1>n$. We actually show that this case is not possible. Let $n$ be such that $A_n>0$ (if this $n$ does not exist, our assertion follows trivially). If $s_1>n$, then there is an infinite sequence of bad numbers $n=s_0<s_1<s_2<\cdots$ such that $A_{s_j}<2^{-4\nu}B_{s_j}=2^{-4\nu}A_{s_{j+1}}$ for every $j\geq 0$. Hence,
			$$
			\frac{A_{s_{j+1}}}{A_{s_j}}>2^{4\nu} \qquad \text{for all }j\geq 0,
			$$
			i.e., the sequence $A_{s_k}$ does not vanish as $k\to \infty$. This  contradicts the hypothesis of $f$ vanishing at infinity, showing the case $s_1>n$ is not possible and thus completing the proof.
		\end{proof}

	A version of Theorem~\ref{THM-main-functions} for $GMS$ can be derived easily:
		\begin{corollary}\label{CORgms}
			Let $\{a_n\}\in GMS$ be  real-valued. If the series $\sum_{n=0}^\infty a_n$	converges, then
			$$
			na_n\to 0 \qquad \text{as }n\to \infty.
			$$
		\end{corollary}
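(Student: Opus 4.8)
The plan is to reduce the statement to its continuous counterpart, Theorem~\ref{THM-main-functions}, by attaching to $\{a_n\}$ the step function $f:\R_+\to\R$ defined by $f(t)=a_{\lfloor t\rfloor}$, i.e.\ $f(t)=a_n$ for $t\in[n,n+1)$. Since $\sum_n a_n$ converges we have $a_n\to 0$, so $f$ vanishes at infinity; being bounded on bounded sets and piecewise constant, $f$ is locally integrable and of locally bounded variation, so it fits the framework of Section~\ref{SEC3}. Moreover $\int_0^N f(t)\,dt=\sum_{n=0}^{\lfloor N\rfloor-1}a_n+(N-\lfloor N\rfloor)a_{\lfloor N\rfloor}$, and since the partial sums of $\sum a_n$ converge and $a_n\to 0$, the improper integral $\int_0^\infty f(t)\,dt$ converges. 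Thus the only nontrivial point is to check that $f\in GM$.

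For the verification I would pass between the discrete $GMS$ inequality and the continuous $GM$ inequality through two elementary conversions. The variation measure $|df|$ of a step function is atomic, with an atom of mass $|a_m-a_{m-1}|$ at each integer $m\ge 1$, so for $x\ge 1$ and $n:=\lfloor x\rfloor$ one has
$$
\int_x^{2x}|df(t)|=\sum_{x<m\le 2x}|a_m-a_{m-1}|\le \sum_{k=n}^{2n+1}|a_{k+1}-a_k|.
$$
In the other direction, using $\ln(1+1/k)\ge 1/(k+1)\ge 1/(2k)$ for $k\ge 1$, one gets $\frac{|a_k|}{k}\le 2\int_k^{k+1}\frac{|f(t)|}{t}\,dt$, and summing over $k$ yields
$$
\sum_{k=n/\lambda}^{\lambda n}\frac{|a_k|}{k}\le 2\int_{n/\lambda-1}^{\lambda n+1}\frac{|f(t)|}{t}\,dt.
$$
Applying the $GMS$ condition to the difference sum on the right of the first display (splitting $[n,2n+1]$ into at most two of the blocks appearing in Definition analogous to the $GMS$ one, to absorb the stray index $2n+1$) and then the second display, I obtain constants $C'>0$ and $\lambda'>1$, depending only on $C$ and $\lambda$, with $\int_x^{2x}|df(t)|\le C'\int_{x/\lambda'}^{\lambda' x}\frac{|f(t)|}{t}\,dt$ for all large $x$. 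For the remaining bounded range of $x$ the inequality is immediate after a further harmless enlargement of $\lambda'$, since $|df|$ has at most the single atom at $1$ there. Hence $f\in GM$.

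With $f\in GM$ real-valued and $\int_0^\infty f(t)\,dt$ convergent, Theorem~\ref{THM-main-functions} gives $tf(t)\to 0$ as $t\to\infty$; for $t\in[n,n+1)$ we have $|tf(t)|=t|a_n|\ge n|a_n|$, so $n|a_n|\to 0$, that is, $na_n\to 0$, which is the claim. The main obstacle is precisely the bookkeeping in the $GM$ verification: aligning the discrete windows $[n,2n]$ and $[n/\lambda,\lambda n]$ of the $GMS$ condition with the continuous windows $[x,2x]$ and $[x/\lambda,\lambda x]$, handling the boundary integers that spill out of a single dyadic block, and tracking the enlarged constants $C',\lambda'$. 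An alternative would be to re-run Lemmas~\ref{LEMgood1-functions} and~\ref{LEMgood2-functions} and the proof of Theorem~\ref{THM-main-functions} directly for sequences, but the reduction above is shorter and explains the word ``easily''.
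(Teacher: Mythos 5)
Your proof is correct and takes essentially the same route as the paper: the paper's proof also attaches the step function $f(x)=a_n$ for $x\in(n,n+1]$, observes that $f\in GM$ if and only if $\{a_n\}\in GMS$ and that convergence of the series is equivalent to convergence of $\int_0^\infty f$, and then invokes Theorem~\ref{THM-main-functions}. The only difference is that you carry out the (correct, if slightly fiddly at the boundary integers) verification of the $GM$/$GMS$ equivalence that the paper dismisses as clear.
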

		\begin{proof}
			Let
			$$
			f(x)=a_n, \qquad x\in (n, n+1], \, n\in \N\cup\{0\}.
			$$
			It is clear that $f\in GM$ if and only if $\{a_n\}\in GMS$. Moreover, the convergence of \eqref{EQseries} is equivalent to the convergence of \eqref{EQintegrals}. Applying Theorem~\ref{THM-main-functions}, we derive that $xf(x)\to 0$ as $x\to \infty$, or in other words, $n|a_n|\to 0$ as $n\to \infty$.
		\end{proof}
	
		\begin{proof}[Proof of Corollary~\ref{CORrealtest-functions}] First we note that if $f\in GM$, then $t^\nu f(t)\in GM$ for every $\nu\in\R$.
			Integration by parts along with the condition $t^\nu f(t)\to 0$ as $t\to 0$ implies that for any $N\in \R_+$,
			$$
			\int_0^N t^{\nu-1}f(t)\, dt=\frac{1}{\nu}N^\nu f(N)-\frac{1}{\nu}\int_0^N t^\nu df(t).
			$$
			Letting $N\to \infty$ yields the desired result, where we apply Theorem~\ref{THM-main-functions} to prove the ``only if'' part.
		\end{proof}
		An analogous result to Corollary~\ref{CORrealtest-functions} holds for $GMS$. This can be easily proved by combining Corollary~\ref{CORgms} and Abel's summation formula.
		\begin{remark}
		A multidimensional version of Corollary~\ref{CORrealtest-functions} for the so-called \textit{weak monotone sequences} (which are not comparable to $GMS$) is proved in \cite{DLTZ}.
		\end{remark}

		\section{Proofs}\label{SEC4}
	
		Let us now prove Theorems~\ref{boundednesscos}~and~\ref{CORunifconv}. 	We also show that in the general case, the assertions of Corollary~\ref{CORcostrans} and Theorem~\ref{CORunifconv} are not true, or in other words, these results are sharp with respect to the general monotonicity condition. However, some of the implications of Corollary~\ref{CORcostrans} and Theorem~\ref{CORunifconv} remain true even in the general case, as we will see.
		
		We remark that the main contribution of this paper is the proof that (i) implies (ii) in both theorems, since the rest was already known or is rather trivial. In fact, that (i) implies (ii) in Theorem~\ref{boundednesscos} was known to be true under the assumption 
		\begin{equation}
		\label{EQaux1}
		t^{2\alpha+2}f(t)\to 0 \qquad \text{as }t\to \infty,
		\end{equation} 
		see \cite{mioHankel,DLT}. The fact that (i) implies (ii) in Theorem~\ref{CORunifconv} under the assumption 
		\begin{equation}
		\label{EQaux2}
		na_n\to 0 \qquad \text{as }n\to \infty
		\end{equation} 
		is included in the statement of Theorem~\ref{THMuccosine}. Theorem~\ref{THM-main-functions} and Corollary~\ref{CORgms} allow us to show that \eqref{EQaux1} and \eqref{EQaux2} are redundant if $\int_0^\infty t^{2\alpha+1}f(t)\, dt$ and $\sum_{n=0}^\infty a_n$ converge, respectively, with $f\in GM$ and $\{a_n\}\in GMS$.

		We emphasize that we deal with real-valued $GM$ functions and $GMS$, since in the non-negative case, the problems discussed are trivial.

		Before proving Theorem~\ref{boundednesscos}, we need to show that $M_{2\alpha+2}(f)$ is finite given the hypotheses of Theorem~\ref{boundednesscos}. To prove this, we use the following known estimate for $GM$ functions (see \cite{LTnachr}):
		\begin{lemma}
			\label{LEMgmest}
			Let  $f\in GM$. The estimate
			$$
			|f(t)|\lesssim \int_{t/\lambda}^{\lambda t}\frac{|f(s)|}{s}\, ds
			$$
			holds for every $t>0$.
		\end{lemma}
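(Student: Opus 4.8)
The plan is to bound the pointwise value $|f(t)|$ by a weighted average of $|f|$ and then absorb the resulting variation term using the $GM$ condition. Since $f$ is locally of bounded variation, for every $s\in[t,2t]$ the elementary estimate $|f(t)-f(s)|\leq \int_{[t,s]}|df|\leq \int_t^{2t}|df(u)|$ gives
$$
|f(t)|\leq |f(s)|+\int_t^{2t}|df(u)|.
$$
I would then integrate this inequality over $s\in[t,2t]$ against the measure $ds/s$. Because $\int_t^{2t}\frac{ds}{s}=\log 2$ is a fixed positive constant (and $f(t)$ is constant in $s$), this yields
$$
(\log 2)\,|f(t)|\leq \int_t^{2t}\frac{|f(s)|}{s}\,ds+(\log 2)\int_t^{2t}|df(u)|.
$$

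Next I would invoke the $GM$ condition with $x=t$ to control the variation term, namely
$$
\int_t^{2t}|df(u)|\leq C\int_{t/\lambda}^{\lambda t}\frac{|f(u)|}{u}\,du.
$$
Recalling the standing assumption $\lambda=2^\nu\geq 2$, one has the inclusion $[t,2t]\subseteq[t/\lambda,\lambda t]$, so the first term on the right-hand side above is likewise dominated by $\int_{t/\lambda}^{\lambda t}\frac{|f(s)|}{s}\,ds$. Combining these two bounds merges everything into a single integral and gives
$$
(\log 2)\,|f(t)|\leq (1+C\log 2)\int_{t/\lambda}^{\lambda t}\frac{|f(s)|}{s}\,ds,
$$
which is exactly the claimed estimate, with implicit constant $(1+C\log 2)/\log 2$.

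The argument is essentially self-contained, the only genuine inputs being the variation inequality $|f(t)-f(s)|\leq\int_{[t,s]}|df|$ and the $GM$ hypothesis. No serious obstacle arises; the single point requiring care is the averaging step, where one must integrate against $ds/s$ rather than $ds$ so that the surviving average is precisely the quantity $\int\frac{|f(s)|}{s}\,ds$ appearing in both the conclusion and the $GM$ condition. This matching is what allows the two terms on the right to be absorbed into one, and it is the reason the weighted (logarithmic) measure is the natural one to use here.
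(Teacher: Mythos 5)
Your proof is correct and is essentially the standard argument for this estimate: bound $|f(t)|$ by its average over $[t,2t]$ against $ds/s$ plus the variation $\int_t^{2t}|df|$, then absorb the variation term via the $GM$ condition and the inclusion $[t,2t]\subseteq[t/\lambda,\lambda t]$ (valid since $\lambda=2^\nu\geq 2$). The paper itself does not prove this lemma but cites it from Liflyand--Tikhonov, where the proof proceeds along the same lines, so there is nothing to add.
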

		
		\begin{lemma}Let  $f\in GM$ be real-valued and $\alpha\in \R$.  If $t^{2\alpha+1}f(t)\in L^1(0,1)$ and $\int_0^\infty t^{2\alpha+1}f(t)\, dt$ converges, then $M_{2\alpha+2}(f)=\sup_{t\in \R_+}t^{2\alpha+2} |f(t)|<\infty$.
		\end{lemma}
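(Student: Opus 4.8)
The plan is to transfer everything to the auxiliary function $g(t):=t^{2\alpha+1}f(t)$ and show that $t\,|g(t)|=t^{2\alpha+2}|f(t)|$ is bounded on $\R_+$. First I would record the facts that make $g$ amenable to the tools already available. As observed in the proof of Corollary~\ref{CORrealtest-functions}, multiplication by a power $t^\nu$ preserves the $GM$ class, so $g=t^{2\alpha+1}f\in GM$, and $g$ is real-valued since $f$ is. By hypothesis $g\in L^1(0,1)$ and $\int_0^\infty g(t)\,dt$ converges. These are precisely the assumptions needed to apply \emph{both} Theorem~\ref{THM-main-functions} and Lemma~\ref{LEMgmest} to $g$, and the whole problem becomes the assertion $\sup_{t}t\,|g(t)|<\infty$.

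Next I would split $\R_+$ into three regions and bound $t\,|g(t)|$ on each. For large $t$, Theorem~\ref{THM-main-functions} applied to $g$ gives $t\,g(t)\to 0$ as $t\to\infty$ (which is in fact exactly $t^{2\alpha+2}f(t)\to 0$), so there is $T$ with $t\,|g(t)|\leq 1$ for $t\geq T$. For the two bounded pieces I would use the pointwise estimate of Lemma~\ref{LEMgmest}: since $1/s\leq \lambda/t$ whenever $s\in[t/\lambda,\lambda t]$, we get $|g(t)|\lesssim \int_{t/\lambda}^{\lambda t}|g(s)|/s\,ds$ and hence $t\,|g(t)|\lesssim \lambda\int_{t/\lambda}^{\lambda t}|g(s)|\,ds$. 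For $t\leq 1/\lambda$ the integration window $[t/\lambda,\lambda t]$ lies inside $(0,1)$, so the right-hand side is controlled by $\lambda\,\|g\|_{L^1(0,1)}$; for $t$ in the compact range $[1/\lambda,T]$ the window stays inside a fixed compact subset of $(0,\infty)$ on which $g$ is integrable (being locally integrable there), yielding a uniform bound. Taking the maximum of the three bounds shows $M_{2\alpha+2}(f)=\sup_{t}t\,|g(t)|<\infty$.

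The substantive input is the behavior at infinity, which is handed to us directly by Theorem~\ref{THM-main-functions}; the remaining work is pure localization. The only point requiring care is the region near the origin, where the hypothesis $t^{2\alpha+1}f\in L^1(0,1)$ is used in an essential way to keep $\int_{t/\lambda}^{\lambda t}|g|$ finite (and in fact uniformly bounded) as $t\to 0$ — this is why that integrability assumption, rather than mere local integrability, appears in the statement. I would also note that writing the quantity as $t\,|g(t)|$ sidesteps any sign issue in the exponent $2\alpha+2$ when $\alpha$ is allowed to range over all of $\R$.
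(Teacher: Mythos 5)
Your proof is correct and follows essentially the same route as the paper: Theorem~\ref{THM-main-functions} applied to $t^{2\alpha+1}f(t)\in GM$ handles the behaviour at infinity, and Lemma~\ref{LEMgmest} together with the $L^1(0,1)$ hypothesis handles the origin. The only (immaterial) difference is on the compact middle range, where the paper invokes local bounded variation of $f$ while you reuse Lemma~\ref{LEMgmest} with local integrability; both are valid.
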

		\begin{proof}
		On the one hand, the fact that $t^{2\alpha+2}  f(t)\to 0$ as $t\to 0$ follows from $t^{2\alpha+1}f(t)\in L^1(0,1)$ and the  estimate given in Lemma~\ref{LEMgmest}. 	On the other hand, $t^{2\alpha+2} f(t)\to 0$ as $t\to \infty$ follows from the convergence of $\int_0^\infty t^{2\alpha+1}f(t)\, dt$ and Theorem~\ref{THM-main-functions} (recall that $t^{2\alpha+1}f(t)\in GM$ provided that $f\in GM$). Finally, since $f$ is locally of bounded variation, $t^{2\alpha+2}|f(t)|$ is bounded on any compact set, which yields the desired result.
		\end{proof}
		
		\begin{proof}[Proof of Theorem~\ref{boundednesscos}]
			It was proved in \cite{mioHankel} that (i) and (ii) are equivalent provided that Theorem~\ref{THM-main-functions} is true. 
			
			We now prove that (i) and (iii) are equivalent. That (iii) implies (i) is clear even in the general case, since if $H_\alpha f(u)$ is bounded, then $H_\alpha f(0)=\int_0^\infty t^{2\alpha+1}f(t)\, dt$ converges. So we are left to prove that if $f\in GM$, then (i) implies (iii). It suffices to prove estimate \eqref{EQ-cossup}, and the claim follows by letting $N\to \infty$.
			
			Let $u>0$ (the case $u=0$ is trivial, since $j_\alpha(0)=1$). First of all, we write
			$$
			\bigg|\int_0^N t^{2\alpha+1}f(t)j_\alpha(ut)\, dt\bigg| \leq \bigg|\int_0^N t^{2\alpha+1}f(t)\, dt\bigg|+\bigg|\int_{0}^{N} t^{2\alpha+1}f(t)(1-j_\alpha(ut))\, dt \bigg|.
			$$
			Applying integration by parts to the second integral on the right hand side of the latter together with \eqref{EQ-derivative-bessel} and the fact that $|j_\alpha(t)|\leq 1$ for all $t\geq 0$, we get
			\begin{align*}
			\bigg|\int_0^N t^{2\alpha+1}f(t)(1-j_\alpha(ut))\,dt\bigg| &\leq  \bigg| \frac{t^{2\alpha+2}}{2\alpha+2} (1-j_{\alpha+1}(ut))f(t) \bigg|_{t=0}^N \bigg| \\
			&\phantom{=} +\bigg| \int_0^N \frac{t^{2\alpha+2}}{2\alpha+2}(1-j_{\alpha+1}(ut))df(t)\bigg|\\
			&\leq \frac{1}{\alpha+1}N^{2\alpha+2}|f(N)| \\
			&\phantom{=}+ \bigg|\int_{0}^N \frac{t^{2\alpha+2}}{2\alpha+2}(1-j_{\alpha+1}(ut))df(t) \bigg|,
			\end{align*}
			where in the last inequality we have used that $t^{2\alpha+2}f(t)\to 0$ as $t\to 0$, which follows from $t^{2\alpha+1}f(t)\in L^1(0,1)$, $f\in GM$, and Lemma~\ref{LEMgmest}. 		
			
			Assume now that $u\leq 1/N$. By Lemma~\ref{LEM-Besselest}, we have $j_{\alpha+1}(ut)\geq 1-(ut)^2/(4(\alpha+2))$ for $t\leq N$, and therefore
			$$
			\bigg|\int_{0}^N \frac{t^{2\alpha+2}}{2\alpha+2}(1-j_{\alpha+1}(ut))df(t) \bigg|\leq \frac{1}{B_\alpha N^2}\int_0^N t^{2\alpha+4}|df(t)|,
			$$
			where $B_\alpha=4(\alpha+2)(2\alpha+2)$.

			Let $n_0=\min\{k\in \mathbb{Z}: 2^k\geq N\}$. Using the $GM$ condition, we have
			\begin{align}
			\frac{1}{B_\alpha N^2}\int_0^N t^{2\alpha+4}|df(t)|& \leq \frac{2^{2(1-n_0)}}{B_\alpha}\sum_{k=-\infty}^{n_0}2^{(2\alpha+4)k}\int_{2^{k-1}}^{2^k}|df(t)|\nonumber \\
			&\leq \frac{C2^{2(1-n_0)}}{B_\alpha}\sum_{k=-\infty}^{n_0}2^{(2\alpha+4)k}\int_{2^{k-1}/\lambda}^{\lambda 2^{k-1}}\frac{|f(t)|}{t}\, dt\nonumber \\
			&\leq \frac{C2^{2(1-n_0)}(2\lambda)^{(2\alpha+4)}}{B_\alpha} \sum_{k=-\infty}^{n_0}\int_{2^{k-1}/\lambda}^{\lambda 2^{k-1}} t^{2\alpha+3}|f(t)|\, dt\nonumber \\ 
			&\leq \frac{C2^{2(1-n_0)}(2\lambda)^{(2\alpha+5)}}{2B_\alpha} M_{2\alpha+2}(f) \int_{0}^{\lambda 2^{n_0-1}} t\, dt \nonumber\\
			&= \frac{C(2\lambda)^{(2\alpha+7)}}{16B_\alpha}M_{2\alpha+2}(f).\label{EQestaux}
			\end{align}

			Assume now $u>1/N$. Then
			$$
			\bigg|\int_{0}^N \frac{t^{2\alpha+2}}{2\alpha+2}(1-j_{\alpha+1}(ut))df(t) \bigg|= \bigg|\bigg( \int_0^{1/u}+\int_{1/u}^N\bigg) \frac{t^{2\alpha+2}}{2\alpha+2}(1-j_{\alpha+1}(ut))df(t) \bigg|. 
			$$
			On the one hand, using the estimate \eqref{EQestaux} we get
			$$
			\bigg|\int_0^{1/u}\frac{t^{2\alpha+2}}{2\alpha+2}(1-j_{\alpha+1}(ut))df(t) \bigg|\leq \frac{C(2\lambda)^{(2\alpha+7)}}{16B_\alpha}M_{2\alpha+2}(f).
			$$
			On the other hand,
			\begin{align*}
				\bigg|\int_{1/u}^N   \frac{t^{2\alpha+2}}{2\alpha+2}(1-j_{\alpha+1}(ut))df(t) \bigg|&\leq \bigg|\int_{1/u}^N \frac{t^{2\alpha+2}}{2\alpha+2}df(t)\bigg|\\
				&\phantom{=}+\int_{1/u}^N \bigg| \frac{t^{2\alpha+2}}{2\alpha+2} j_{\alpha+1}(ut) df(t) \bigg|.
			\end{align*}
			The first integral on the right hand side of the latter is less than or equal to 
			$$
			\sup_{0\leq a<b\leq \infty}\bigg|\int_a^b \frac{t^{2\alpha+2}}{2\alpha+2}df(t)\bigg|,
			$$
			which is finite due to the convergence of $\int_0^\infty t^{2\alpha+2}df(t)$, by Corollary~\ref{CORrealtest-functions}. Finally, let $n_1=\max\{k\in \mathbb{Z}: 2^k\leq 1/u\}$. Using the $GM$ condition and estimate \eqref{EQ-besselestinfity}, we derive
			
			\begin{align*}
			 \bigg|\int_{1/u}^N \frac{t^{2\alpha+2}}{2\alpha+2}j_{\alpha+1}(ut)df(t)\bigg|&\leq \frac{S_{\alpha+1}}{u^{\alpha+3/2}}\int_{1/u}^\infty \frac{t^{\alpha+1/2}}{2\alpha+2}|df(t)|\\
			& \leq \frac{2^{(n_1+1)(\alpha+3/2)}S_{\alpha+1}}{2\alpha+2}\sum_{k=n_1}^\infty 2^{(k+1)(\alpha+1/2)}\int_{2^k}^{2^{k+1}}|df(t)|\\
			&\leq \frac{C2^{(n_1+1)(\alpha+3/2)}S_{\alpha+1}}{2\alpha+2}\sum_{k=n_1}^\infty  2^{(k+1)(\alpha+1/2)} \int_{2^k/\lambda}^{\lambda 2^k}\frac{|f(t)|}{t}\, dt.
			\end{align*}
		Since 
		\begin{align*}
		\sum_{k=n_1}^\infty  2^{(k+1)(\alpha+1/2)} \int_{2^k/\lambda}^{\lambda 2^k}\frac{|f(t)|}{t}\, dt & \leq \lambda(2\lambda)^{\alpha+1/2} \int_{2^{n_1}/\lambda}^\infty t^{\alpha-1/2}|f(t)|\, dt\\
		&\leq \lambda(2\lambda)^{\alpha+1/2}M_{2\alpha+2}(f)\int_{2^{n_1}/\lambda}^\infty t^{-\alpha-5/2}\, dt\\
		&= \frac{2^{\alpha+1/2}\lambda^{2\alpha+3}}{\alpha+3/2}2^{-n_1(\alpha+3/2)}M_{2\alpha+2}(f),
		\end{align*}
		we conclude that
		$$
		\bigg|\int_{1/u}^N \frac{t^{2\alpha+2}}{2\alpha+2}j_{\alpha+1}(ut)df(t)\bigg|\leq \frac{C\lambda(2\lambda)^{2\alpha+2}S_\alpha}{(\alpha+3/2)(2\alpha+2)}M_{2\alpha+2}(f).
		$$
		Collecting the above estimates, we arrive at \eqref{EQ-cossup}.
		\end{proof}
		\begin{remark}
			Although the proof of Theorem~\ref{boundednesscos} can be done in a much simpler way if we disregard the estimate \eqref{EQ-cossup} (as in Theorem~\ref{CORunifconv}), we prefer to include it to show its dependence on the parameter $\alpha$.
		\end{remark}

In order to prove Theorem~\ref{CORunifconv}, we need the following observation.
\begin{remark}
	\label{REM-unifconvseries}
	In the proof of Theorem~\ref{THMuccosine}, the hypothesis $n|a_n|\to 0$ is only used to prove the ``if'' part, whilst the ``only if'' part only requires the convergence of \eqref{EQseries}.
\end{remark}		

\begin{proof}[Proof of Theorem~\ref{CORunifconv}]
	The convergence of \eqref{EQseries} implies $n|a_n|\to 0$ as $n\to \infty$, by Corollary~\ref{CORgms}. Therefore, according to Remark~\ref{REM-unifconvseries}, the equivalence of (i) and (ii) follows by applying Theorem~\ref{THMuccosine}, since we have shown the hypothesis $n|a_n|\to 0$ is redundant in order to prove the ``if'' part. Also, as we observed in the Introduction, the uniform convergence of the partial sums $\sum_{n=0}^N a_n\cos nx$ implies the boundedness of the limit function $\sum_{n=0}^\infty a_n\cos nx$, i.e., (ii) implies (iii). Finally, (iii) trivially implies (i) by choosing $x=0$.
\end{proof}

\noindent \textbf{Sharpness}.
To conclude, let us discuss what implications of Corollary~\ref{CORcostrans} and Theorem~\ref{CORunifconv} hold in the general case and which ones do not. We start with the cosine transform. For a given $f\in L^1(0,1)$, since $\widehat{f}_{\cos}(0)=\int_0^\infty f(t)\, dt$, the convergence of the integral \eqref{EQintegrals} is necessary for the (pointwise, and therefore also uniform) convergence and boundedness of $\widehat{f}_{\cos}$, but not sufficient. Indeed, consider the integral (cf. \cite[pp. 7--8]{EMOTtables})
$$
\widehat{f}_{\cos}(u)=\int_0^\infty t^{-1/2} \cos t\cos ut\, dt=\frac{\sqrt{\pi}}{2\sqrt{2}}\bigg(\frac{1}{\sqrt{u+1}}+\frac{1}{\sqrt{u-1}}\bigg), \qquad u>0.
$$
It is clear that, for $0<u\leq 1$, $\widehat{f}_{\cos}(u)$ does not even converge, and moreover it tends to infinity as $u\to 1^+$, although, as is well known, the integral $\int_0^\infty t^{-1/2}\cos t\, dt$ converges (see Fresnel integrals, \cite[pp. 300--301]{AS}). We also show that uniform convergence implies the boundedness of the limit function, but not vice-versa. Certainly, if $\widehat{f}_{\cos}$ converges uniformly, for a fixed $\varepsilon >0$ we can find $N\in \R_+$ such that
$$
\bigg|\int_{M_1}^{M_2}f(t)\cos ut\, dt\bigg|<\varepsilon,\qquad \text{if }N\leq M_1<M_2,
$$
and hence,
$$
\bigg|\int_0^\infty f(t)\cos ut\, dt\bigg|\leq \int_0^N|f(t)|\, dt+\varepsilon<\infty,
$$
since $f\in L^1(0,1)$ is locally integrable on $(0,\infty)$. To see the contrary is not true, we just take 
$$
f(t)=\begin{cases}
e^{-t},&\text{if }0\leq t\leq 1,\\
0, &\text{if }t>1.
\end{cases}
$$
It can be easily shown, integrating by parts twice, that
$$
g(u):=\widehat{f}_{\cos}(u)=\frac{1-e^{-1}\cos u+e^{-1}u\sin u}{1+u^2}.
$$
Since $f$ is piecewise smooth and integrable, we have that $\widehat{g}_{\cos}(t)=f(t)$ almost everywhere in $t\in \R_+$. In particular, $\widehat{g}_{\cos}$ is bounded and converges  to a discontinuous function, thus the convergence cannot be uniform, since the partial integrals $\int_0^N g(u)\cos ut\, du$ are continuous.

In the case of cosine series, the situation is analogous. The convergence of $\sum_{n=0}^\infty a_n$ is necessary but not sufficient to guarantee the uniform convergence and boundedness of $\sum_{n=0}^\infty a_n\cos nx$. Indeed, the necessity part is trivial, as for the sufficiency part let us consider $a_n=\frac{\cos n}{n}$, $n\geq 1$. By the well known Dirichlet test for series convergence, it follows that $\sum_{n=1}^\infty \frac{\cos n}{n}$ converges. However, the cosine series
$$
\sum_{n=1}^\infty \frac{\cos n}{n}\cos nx
$$
diverges at $x=1$, thus it is not bounded neither its partial sums converge uniformly. Indeed, to see this we first observe that
$$
\sum_{n=1}^N \cos^2 n =\sum_{n=1}^N \bigg( \frac{e^{in}+e^{-in}}{2}\bigg)^2 =\frac{N}{2}+ \sum_{n=1}^N \frac{e^{2in}+e^{-2in}}{4} = \frac{N}{2}+\frac{D_N(2)-1}{4},
$$
where $D_N(x)$ denotes the Dirichlet kernel (see \cite{zygmund})
$$
D_N(x)=\sum_{n=-N}^N e^{ikx} = \frac{\sin (N+1/2)x}{\sin x/2}.
$$
Applying Abel's transformation, we obtain
$$
\sum_{n=1}^\infty \frac{\cos^2 n}{n} = \frac{1}{N}\sum_{n=1}^N \cos^2 n  -\sum_{k=1}^{N-1}\frac{1}{k}\sum_{n=1}^k  \cos^2 n.
$$
Joining the above equalities, it readily follows that the series $\sum_{n=1}^\infty n^{-1}\cos^2 n$ diverges.

The fact that the uniform convergence of partial sums of a cosine series imply its boundedness may be proved in the same way as the analogue for cosine integrals we showed above. One may also apply the following argument: since the partial sums are continuous, the limit function $g(x)=\sum a_n\cos nx$ is continuous, because continuity is preserved by the uniform convergence. The function $g(x)$ is periodic, thus its boundedness follows, since a periodic continuous function must be bounded. Finally, we prove the boundedness of a cosine series does not imply its partial sums converge uniformly. Indeed, consider the function
$$
g(x)=\begin{cases}
1, &\text{if } x\in [\pi/2,3\pi/2],\\
0, &\text{otherwise}.
\end{cases}
$$
If we extend $g$ periodically to be defined on $\R$ is not hard to show that the Fourier series of $g$, say $G$, is actually a cosine series, since $g$ is even, and 
$$
G(x)=\sum_{n=1}^\infty \frac{(-1)^n}{2n-1}\cos \big((2n-1)x\big).
$$
Since $g(x)\in L^1(0,2\pi)$ and it is piecewise smooth, $G(x)=g(x)$ almost everywhere in $x\in [0,2\pi)$. Since the continuous partial sums 
$$\sum_{n=1}^N \frac{(-1)^n}{2n-1}\cos \big((2n-1)x\big)$$
converge almost everywhere to a discontinuous function as $N\to \infty$, the convergence cannot be uniform.

\end{document}